\documentclass{amsart}    
\usepackage{amsmath,dsfont} 
\usepackage{paralist}
\usepackage{graphics} 
\usepackage{epsfig} 
\usepackage{graphicx}  \usepackage{epstopdf} 
\usepackage[colorlinks=true]{hyperref}
\hypersetup{urlcolor=blue, citecolor=blue} 

\textheight=8.2 true in
\textwidth=6.5 true in
\topmargin 30pt 
\hoffset=-57pt 
\setcounter{page}{1}



\newtheorem{Proposition}{Proposition}[section]

\newtheorem{Lemme}{Lemma}[section]
\newtheorem{Theoreme}{Theorem}[section]

\newtheorem{Corollaire}{Corollary}[section]
\newtheorem{Remarque}{Remark}


\def \R{\mathbb{R}}
\def \Rt{\mathbb{R}^3}

\def \ds{\displaystyle}


\usepackage{amssymb}


\title[] %
{On the fractional regularity  for an  elliptic nonlinear  singular drift equation}  
\author[ Oscar Jarr\'in]{}

\subjclass[2020]{Primary: 35B65; Secondary: 35B30, 35A01}
\keywords{Fractional Laplacian operator; Singular integral operator; Regularity of weak solutions; Surface quasi-geostrophic equation; Stationary solutions}

\thanks{$^*$Corresponding author:  Oscar Jarr\'in}

\begin{document}
	\maketitle

	\centerline{\scshape Oscar Jarr\'in}
	\medskip
	{\footnotesize
		\centerline{Escuela de Ciencias Físicas y Matemáticas}
		\centerline{Universidad de Las Américas}
		\centerline{V\'ia a Nay\'on, C.P.170124, Quito, Ecuador}
	} 
	
\bigskip
\begin{abstract} We consider an elliptic equation with the fractional Laplacian operator $(-\Delta)^{\frac{\alpha}{2}}$ in the dissipative term, a singular integral operator ${\bf A}(\cdot)$  in the nonlinear term, and an external source $f$. The key example is the stationary (time-independent) counterpart of the surface quasi-geostrophic equation.

Under    suitable assumptions on  $f$ and natural assumptions on ${\bf A}(\cdot)$ in the setting of Sobolev spaces, our main result examines how the fractional power $\alpha$ propagates and optimally improves the regularity of weak $L^p$-solutions to this equation. 
\end{abstract}
\section{Introduction} This article examines the following fractional elliptic nonlinear singular drift equation:
\begin{equation}\label{Equation}
(-\Delta)^{\frac{\alpha}{2}} u + {\bf A}(u)\cdot \vec{\nabla}u =f, \qquad \text{div}({\bf A}(u))=0, \quad \alpha>0,
\end{equation}
where $u:\R^n \to \R$ denotes the solution, $f: \R^n \to \R$ represents a given external source, and the vector field ${\bf A}(u)$ is given by
\[ {\bf A}(u) = \big(A_1(u), \cdots, A_n(u)  \big). \]
Here,   each component $A_i$ is assumed to be a singular integral operator satisfying certain additional suitable properties, which are explicitly stated in expressions (\ref{Condition-A}) and (\ref{Condition-A-Sobolev}) below.  On the other hand, one the of the main features of this equation is  the fractional derivative operator $(-\Delta)^{\frac{\alpha}{2}}$, which can be  defined in the Fourier variable by the symbol $|\xi|^\alpha$. This operator  is motivated by several numerical and experimental works on fractional dissipative partial differential equations \cite{Meerschaert,Meerschaert1}.  

\medskip

The equation (\ref{Equation}) is primarily motivated by its evolution (time-dependent) counterpart:
\begin{equation}\label{Evolution-Equation-Intro}
\partial_t u + (-\Delta)^{\frac{\alpha}{2}} u + {\bf A}(u) \cdot \vec{\nabla} u = f, \qquad \text{div}({\bf A}(u)) = 0.
\end{equation}
Particularly, in the case $n=2$ and setting
\begin{equation}\label{Example}
A_1 = - \partial_2 (-\Delta)^{-\frac{1}{2}},  \qquad A_2= \partial_1 (-\Delta)^{-\frac{1}{2}}.
\end{equation}
as the  Riesz operators, we obtain the surface quasi-geostrophic (SQG) equation. This equation models the temperature or buoyancy of a strongly stratified fluid in a rapidly rotating regime and is a fundamental equation in geophysics and meteorology \cite{Pedlosky}. Note that with these specific nonlocal operators $A_i$, and since $n=2$, it directly follows that  $\text{div}({\bf A}(u))=0$. This fact makes this assumption reasonable for the more general case of equations (\ref{Equation}) and (\ref{Evolution-Equation-Intro}).

\medskip

The (SQG) equation and some of its variants have received considerable attention in the literature. For simplicity, we will only cite a few works below. One of the principal aims is to provide a better understanding of how the effects of the fractional dissipation term, given by $(-\Delta)^{\frac{\alpha}{2}}u$, along with the nonlinear effects of ${\bf A}(u)\cdot \vec{\nabla}u$, interact in the qualitative study of solutions. Results concerning global well-posedness and smoothing effects can be found, for instance, in \cite{Dong, Michael}.

\medskip

On the other hand, the long-term behavior of solutions is discussed in \cite{Wu-Liu}. There, the authors consider an external source $f$ belonging to the $L^p$-spaces with $p>2/\alpha$ and prove the existence of a global attractor, which is a compact set in the strong topology of the $L^2$-space.

\medskip

As pointed out in some previous works involving other evolution models \cite{Cortez, Chueshov, Jarrin2}, one of the main features of the global attractor is that, under certain conditions, it contains the stationary (time-independent) solutions of these models. In particular, for the evolution model (\ref{Evolution-Equation}), its stationary solutions are precisely characterized as the solutions of equation (\ref{Equation}).

\medskip

This fact motivates the study of solutions to equation (\ref{Equation}) in different settings. In the recent work \cite{Chamorro-Cortez}, for any fractional parameter $0<\alpha<2$, the authors established an initial result on the existence of weak solutions to equation (\ref{Equation}) in the setting of Sobolev spaces $\dot{H}^{\frac{\alpha}{2}}(\R^n)$. Subsequently, in the particular homogeneous case when $f\equiv 0$, they also provided a first regularity result for the solutions. We highlight that these regularity properties are driven by the fractional power $\alpha$. Specifically:
\begin{itemize}
    \item When $\frac{n+2}{3}<\alpha$, a  bootstrap argument shows that $u$ becomes smooth.
    \item In the case $1<\alpha \leq \frac{n+2}{3}$, it is proven that this argument also holds under the additional assumption that $u \in L^\infty(\R^n)$. Note that, in general, this assumption does not follow from the fact that $u \in \dot{H}^{\frac{\alpha}{2}}(\R^n)$. See \cite[Appendix B]{Chamorro-Cortez} for details.
    \item Finally, as also pointed out in \cite[Appendix B]{Chamorro-Cortez}, the case $0<\alpha\leq1$ remains open and is considerably more delicate due to the weaker effects of the fractional Laplacian operator.
\end{itemize}

 \medskip

In this context, the main purpose of this article is to introduce a different approach and new ideas to examine the regularity properties of solutions $u$ to equation (\ref{Equation}). Inspired by previous works on nonlinear elliptic equations \cite{Abramyan, Bhakta, Caffarelli, Dong2}, we consider the setting of $L^p$-spaces and, more generally, $L^{p,q}$-spaces. Specifically, following some of the ideas in \cite{Bjorland}, we first prove the existence of weak solutions to equation (\ref{Equation}) in these  spaces.

\medskip

Thereafter, assuming a given regularity for the source term $f$ and some boundedness properties for the operator ${\bf A}(\cdot)$ in the Sobolev spaces, our main result demonstrates how the fractional derivative operator $(-\Delta)^{\frac{\alpha}{2}}$ propagates and optimally improves this regularity for $u$. Here, inspired by the methodology developed in \cite{Jarrin}, the novel idea for studying the regularity of solutions is to utilize certain inherent properties of the parabolic equation (\ref{Evolution-Equation-Intro}).

\subsection*{Statement of the results} For completeness, we begin by introducing and recalling some of the main properties of Lorentz spaces. For a measurable function  $g: \R^n \to \R$  and for a parameter $\lambda \geq 0$, we define  the distribution function
\[ d_g(\lambda)= dx \Big( \left\{  x \in \Rt: \ | g(x)|>\lambda \right\} \Big), \]
where $dx$ denotes the Lebesgue measure. Then, the re-arrangement function $g^{*}$ is defined as follows:
\[ g^{*}(t)= \inf \{ \lambda \geq 0: \ d_g(\lambda) \leq t\}. \]
In this context,  for $1\leq p <+\infty$ and $1\leq q \leq +\infty$, the Lorentz space $L^{p,q}(\R^n)$ consists of measure functions $g: \R^n \to \R$ such that $\| g \|_{L^{p,q}}<+\infty$, where:
\begin{equation*}
\| g \|_{L^{p,q}}=
\begin{cases}\vspace{3mm}
\ds{\frac{q}{p} \left( \int_{0}^{+\infty} (t^{1/p} g^{*}(t))^{q} dt\right)^{1/q}}, \quad q<+\infty, \\
\ds{\sup_{t>0} \, t^{1/p} g^{*}(t)}, \quad q=+\infty.
\end{cases}
\end{equation*}
The quantity $\| g \|_{L^{p,q}}$ is often used as a norm, even thought it does not verify the triangle inequality. Nevertheless, there exists  an equivalent norm (strictly speaking) which makes these spaces into Banach spaces. Additionally, these spaces are homogeneous of degree $-\frac{n}{p}$ and for $1 \leq q_1 <  p < q_2\leq +\infty$ the following continuous  embedding  holds:
\[ L^{p,q_1}(\R^n) \subset L^{p}(\R^n)=L^{p,p}(\R^n) \subset L^{p,q_2}(\R^n). \]  
For $p=+\infty$, we  have the identity  $L^{\infty,\infty}(\R^n)=L^\infty(\R^n)$.

\medskip

As mentioned, our first result establishes the existence of weak $L^p$-solutions to  equation (\ref{Equation}). These solutions are obtained from small external sources $f$  by applying a contraction principle to the following fixed-point problem:
\begin{equation}\label{Fixed-Point-Intro}
 u = - (-\Delta)^{-\frac{\alpha}{2}} \text{div}\big(u\, {\bf A}(u)\big) + (-\Delta)^{-\frac{\alpha}{2}} f,   
\end{equation}
where, due to the assumption that $\text{div}({\bf A}(u))=0$, the nonlinear term in equation (\ref{Equation}) can be rewritten as ${\bf A}(u)\cdot \vec{\nabla} u= \text{div}\big(u\, {\bf A}(u)\big)$. 

\medskip

Handling  this nonlinear term  imposes some technical difficulties. On the one hand, note that the right-hand side of (\ref{Fixed-Point-Intro}) defines a nonlinear operator acting on $u$. The only information that $u\in L^p(\R^n)$ seems insufficient to prove that this  nonlinear operator is contractive.  To overcome this problem, we show that the action of the  operator $- (-\Delta)^{-\frac{\alpha}{2}}\text{div}\big(\cdot)$ is equivalently given by a convolution kernel $K_\alpha(\cdot)$. See Lemma \ref{Lem-Kernel} for further details. This convolution kernel is composed of  homogeneous functions that are not $L^p$-integrable but they possess good properties within the larger framework of the Lorentz spaces. In this context, we will see that the space $L^{\frac{n}{\alpha-1},\infty}(\R^n)$ naturally appears. Here, the condition $1<\frac{n}{\alpha-1}<+\infty$ directly imposes  a first  constraint on  the fractional power $\alpha$ as $1<\alpha<n+1$.

\medskip

On the other hand, the entire nonlinear term $- (-\Delta)^{-\frac{\alpha}{2}} \text{div}\big(u\, {\bf A}(u)\big)$ also imposes certain assumptions  on the nonlocal operator ${\bf A}(\cdot)$. Specifically, for  $1<p_1<+\infty$,  $1\leq q_1\leq +\infty$, and  a constant $C_{A}=C_A(n,p1,q_1)>0$ depending on $n,p_1$ and $q_1$, we will assume that:
\begin{equation}\label{Condition-A}
{\bf A}(0)=0, \quad  \text{and}\quad \|{\bf A}(u_1)-{\bf A}(u_2)\|_{L^{p_1,q_1}} \leq C_A\| u_1 - u_2 \|_{L^{p_1,q_1}}.
\end{equation}
Note that  these conditions are naturally satisfied by the key example of the operator ${\bf A}(\cdot)$ given in (\ref{Example}),  which makes them reasonable.

\medskip

In this setting, our first result is stated as follows:
\begin{Proposition}\label{Th-Existence} Assume that the operator ${\bf A}(\cdot)$ verifies (\ref{Condition-A}). Let $1<\alpha < \frac{n}{2}+1$ and  $\frac{n}{(n+1)-\alpha}<p<+\infty$. Assume that $(-\Delta)^{-\frac{\alpha}{2}}f \in L^{\frac{n}{\alpha-1},\infty}\cap L^p(\R^n)$.  Let $R>0$ be such that 
\begin{equation}\label{Control-source}
\max\left(  \left\| (-\Delta)^{-\frac{\alpha}{2}} f \right\|_{L^{\frac{n}{\alpha-1},\infty}}, \,  \| (-\Delta)^{-\frac{\alpha}{2}} f \|_{L^p} \right) \leq R.   
\end{equation} 

There exists a universal quantity $\eta_0=\eta_0(\alpha,n)>0$, depending only  on the fractional power $\alpha$ and the dimension $n$, such that if 
\begin{equation}\label{Condition-0-f}
  R\leq \eta_0,
\end{equation}
then  equation (\ref{Equation}) has a solution 
\[ u \in L^{\frac{n}{\alpha-1},\infty}\cap L^p(\R^n). \]
Moreover,  this solution is uniquely determined and satisfies  $\| u - (-\Delta)^{-\frac{\alpha}{2}}f  \|_{L^{\frac{n}{\alpha-1},\infty}} \leq  R$. 
\end{Proposition}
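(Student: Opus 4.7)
The strategy is to apply Banach's contraction principle to the fixed-point problem (\ref{Fixed-Point-Intro}) in the intersection Banach space $X = L^{\frac{n}{\alpha-1},\infty}(\R^n) \cap L^p(\R^n)$ equipped with the norm $\|u\|_X = \max(\|u\|_{L^{n/(\alpha-1),\infty}}, \|u\|_{L^p})$. By Lemma \ref{Lem-Kernel}, the nonlinear term rewrites as $-(-\Delta)^{-\alpha/2}\mathrm{div}(u\,{\bf A}(u)) = K_\alpha * (u\,{\bf A}(u))$ for a convolution kernel $K_\alpha$ homogeneous of degree $-(n+1-\alpha)$, which thus lies in the weak Lorentz space $L^{\frac{n}{n+1-\alpha},\infty}(\R^n)$.

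The heart of the proof is a bilinear estimate $\|B(u,v)\|_X \lesssim \|u\|_X\|v\|_X$ for the operator $B(u,v) := K_\alpha * (v\,{\bf A}(u))$, obtained in two stages. At the critical scale, H\"older's inequality in Lorentz spaces gives
\[ \|v\,{\bf A}(u)\|_{L^{n/(2(\alpha-1)),\infty}} \lesssim \|v\|_{L^{n/(\alpha-1),\infty}}\|{\bf A}(u)\|_{L^{n/(\alpha-1),\infty}}, \]
which requires $\frac{n}{2(\alpha-1)} > 1$, producing exactly the restriction $\alpha < \frac{n}{2}+1$. O'Neil's convolution inequality $L^{n/(n+1-\alpha),\infty} * L^{n/(2(\alpha-1)),\infty} \hookrightarrow L^{n/(\alpha-1),\infty}$, combined with the Lipschitz hypothesis (\ref{Condition-A}) applied at $p_1 = \frac{n}{\alpha-1}$, $q_1 = \infty$, then yields the desired bound on $B(u,v)$ in $L^{n/(\alpha-1),\infty}$. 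For the sub-critical $L^p$ bound, the analogous chain uses $\|v\,{\bf A}(u)\|_{L^{r}} \lesssim \|v\|_{L^p}\|{\bf A}(u)\|_{L^{n/(\alpha-1),\infty}}$ with $\frac{1}{r} = \frac{1}{p} + \frac{\alpha-1}{n}$; the assumption $p > \frac{n}{n+1-\alpha}$ ensures $r > 1$, and O'Neil's inequality places $K_\alpha*(v\,{\bf A}(u))$ back in $L^p$.

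Once these estimates are in hand, the standard Picard iteration closes: the map $\Phi(u) = K_\alpha*(u\,{\bf A}(u)) + (-\Delta)^{-\alpha/2}f$ stabilizes the closed ball of radius $R$ centered at $(-\Delta)^{-\alpha/2}f$ in $X$ and is a strict contraction on it, provided $R \leq \eta_0$ for a universal constant $\eta_0 = \eta_0(\alpha,n)$ absorbing the implicit constants in the bilinear estimates. The unique fixed point is the sought-after solution $u$, and the control $\|u - (-\Delta)^{-\alpha/2}f\|_{L^{n/(\alpha-1),\infty}} \leq R$ is built into the stability of this ball.

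The main technical obstacle will be the careful bookkeeping of Lorentz indices: both the critical scale $L^{n/(\alpha-1),\infty}$ and the sub-critical scale $L^p$ must close simultaneously using only the information that $u \in X$, which forces one to invoke the Lipschitz hypothesis (\ref{Condition-A}) at compatible Lorentz exponents in each stage. The constraints $1 < \alpha < \frac{n}{2} + 1$ and $p > \frac{n}{n+1-\alpha}$ in the statement arise precisely from ensuring that admissible H\"older and Young-type index pairs exist for this closure.
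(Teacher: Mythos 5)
Your overall skeleton (rewriting the equation as the fixed point $u=K_\alpha\ast(u\,{\bf A}(u))+(-\Delta)^{-\frac{\alpha}{2}}f$, the critical-scale H\"older/Young chain giving the constraint $\alpha<\frac n2+1$, and the sub-critical chain with $\frac1r=\frac1p+\frac{\alpha-1}{n}$ giving $p>\frac{n}{(n+1)-\alpha}$) matches the paper's index bookkeeping. But there is a genuine gap: you run a single contraction in the intersection space $X=L^{\frac{n}{\alpha-1},\infty}\cap L^p$ and assert that the smallness threshold $\eta_0$ "absorbs the implicit constants in the bilinear estimates," whereas the statement requires $\eta_0=\eta_0(\alpha,n)$ to be independent of $p$. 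The constant in the Lorentz--Young (O'Neil) inequality used for the $L^p$ component is, as in the paper's expression (\ref{Expression-C1}), $C_1(p)=C\,p\left(\frac{p_1}{p_1-1}\right)\left(\frac{p_2}{p_2-1}\right)$ with $p_2=\frac{np}{n+p(\alpha-1)}$; this blows up both as $p\to\frac{n}{(n+1)-\alpha}$ (where $p_2\to1$) and as $p\to+\infty$. Hence the contraction constant of your map $\Phi$ on $X$ is not uniform in $p$, and your argument only yields existence under a smallness condition $R\leq\eta(\alpha,n,p)$, which degenerates at both ends of the admissible range of $p$ and does not prove the proposition as stated.

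The paper avoids exactly this obstruction by decoupling the two scales: it first closes the contraction in $L^{\frac{n}{\alpha-1},\infty}$ alone, where all constants depend only on $(\alpha,n)$ and which already gives uniqueness and the ball estimate $\|u-(-\Delta)^{-\frac{\alpha}{2}}f\|_{L^{\frac{n}{\alpha-1},\infty}}\leq R$; then it proves $u\in L^p$ \emph{a posteriori}. For $p$ in the compact interval $I_2=[2,\frac{3n}{\alpha-1}]$, where $\sup_{p\in I_2}C_1(p)<+\infty$, it propagates a uniform $L^p$ bound along the Picard iterates with a smallness condition depending only on $(\alpha,n)$; for $p$ below or above $I_2$ it does not use smallness at all, but rather interpolation in Lorentz spaces together with the membership $K_\alpha\in L^{\frac{n}{(n+1)-\alpha},\infty}$ (using the $L^1\ast L^{p,q}$ and $L^{p,q}\ast L^{p',q'}\to L^\infty$ forms of Young's inequality) to place the nonlinear term in the needed intermediate spaces. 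To repair your proof you would need either this kind of splitting-and-interpolation step, or a bilinear estimate in the $L^p$ component whose constant is uniform over $\left(\frac{n}{(n+1)-\alpha},+\infty\right)$, which the Lorentz--Young inequality does not provide.
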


For a fixed $n$, recall that the $L^{\frac{n}{\alpha-1},\infty}$-space naturally imposes the condition $1<\alpha<n+1$. However, due to additional (technical) constraints explained in Remark \ref{Rmk-1} below, we must restrict our consideration to the case $1<\alpha < \frac{n}{2}+1$. To the best of our knowledge, the ideas used to prove this proposition are no longer valid in the complementary cases when $0<\alpha\leq 1$ or $\alpha\geq n/2+1$, which remain open questions.

\medskip

For fixed $n$ and  $\alpha$, note that the existence of $L^p$-solutions holds for parameters $p$ in the range $\frac{n}{(n+1)-\alpha}<p<+\infty$. Following some ideas from \cite{Bjorland} and \cite{Jarrin2}, we think that  this result could  be extended to the limit cases of the spaces $L^{\frac{n}{(n+1)-\alpha},\infty}(\R^n)$ and $L^\infty(\R^n)$. On the other hand, as before, the ideas used to prove this proposition are no longer valid in the complementary cases when $1\leq p \leq \frac{n}{(n+1)-\alpha}$, which  also remain open questions.

\medskip

Now, we state the main result concerning the regularity of weak $L^p$-solutions. It is worth noting that this result is independent of the previous one, as we primarily assume the existence of weak  $L^p$-solutions and focus on their optimal regularity gain. 

\medskip

This regularity gain is quantified by the fractional power $\alpha$, along with the assumed initial regularity of the external source $f$ and the boundedness properties of the operator ${\bf A}(\cdot)$ in equation (\ref{Equation}). Specifically, for any $\sigma\in \R$ and $1<r<+\infty$, we will assume that 
\begin{equation}\label{Condition-A-Sobolev}
 \| {\bf A}(u)\|_{\dot{W}^{\sigma,r}}\leq C\, \| u\|_{\dot{W}^{\sigma,r}}.
\end{equation}
As the previous assumption given in (\ref{Condition-A}), note that this properties is also naturally satisfied by the key example of this operator  given in (\ref{Example}).

\begin{Theoreme}\label{Th-Regularity} Assume that the operator ${\bf A}(\cdot)$ verifies (\ref{Condition-A}) and (\ref{Condition-A-Sobolev}). Let $\alpha>1$, $\max\left(1,\frac{n}{\alpha-1}\right)<p<+\infty$ and let $u \in   L^p(\R^n)\cap L^{1,\infty}(\R^n)$ be a weak solution of equation (\ref{Equation}) associated with the external source $f$. 

\medskip

\begin{enumerate}
    \item  For any $1<r<+\infty$ and for  a parameter $s\geq 0$, assume that $  f\in \dot{W}^{-1,r}\cap\dot{W}^{s,r}(\R^n)$.  Then, it follows that  $u \in \dot{W}^{s+\alpha,r}(\R^n)$. 

\medskip

\item Additionally, for $\varepsilon>0$ assume that  $ f \notin \dot{W}^{s+\varepsilon,r}(\R^n)$.  Then, it holds that $u\notin \dot{W}^{s+\alpha+\varepsilon,r}(\R^n)$.
\end{enumerate}
\end{Theoreme}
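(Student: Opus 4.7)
The plan is to treat items (1) and (2) separately. For item (1), I would use a bootstrap argument based on the integrated form
\[
u \;=\; (-\Delta)^{-\alpha/2}f \;-\; (-\Delta)^{-\alpha/2}\text{div}\bigl(u\,{\bf A}(u)\bigr),
\]
supplemented by a parabolic-type smoothing step that views $u$ as the stationary solution of the associated evolution equation (\ref{Evolution-Equation-Intro}). For item (2), I would argue by contradiction from the pointwise equation. The nonlinear estimates throughout rely on a fractional Leibniz (Kato--Ponce) bound on $u\,{\bf A}(u)$, together with the boundedness hypotheses (\ref{Condition-A}) and (\ref{Condition-A-Sobolev}).

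For item (1), the first stage is to upgrade integrability: from $u\in L^p\cap L^{1,\infty}$ with $p>n/(\alpha-1)$, H\"older and (\ref{Condition-A}) give $u\,{\bf A}(u)\in L^{p/2}$, after which the order-$(\alpha-1)$ smoothing of $(-\Delta)^{-\alpha/2}\text{div}$ (via Hardy--Littlewood--Sobolev, interpolated against $L^{1,\infty}$) strictly raises the integrability exponent, so that iterating finitely many times yields $u\in L^\infty$. The second stage is a base Sobolev bound: with $u\in L^\infty$ and $f\in\dot{W}^{-1,r}$, one gets $u\,{\bf A}(u)\in L^r$, hence $u\in\dot{W}^{\alpha-1,r}$. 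Here the parabolic viewpoint is useful at endpoint indices: writing Duhamel's formula for the time-independent solution $v(t)\equiv u$ and evaluating at $t=1$ replaces the singular operator $(-\Delta)^{-\alpha/2}\text{div}$ by a Mikhlin-type multiplier of order $-(\alpha-1)$ (bounded on every $\dot{W}^{\sigma,r}$ since $\alpha>1$), plus a Schwartz-smoothing remainder $e^{-(-\Delta)^{\alpha/2}}u$. The third stage is the iteration: given $u\in\dot{W}^{\sigma,r}\cap L^\infty$, Kato--Ponce and (\ref{Condition-A-Sobolev}) yield $u\,{\bf A}(u)\in\dot{W}^{\sigma,r}$, whence $u\in\dot{W}^{\min(s+\alpha,\,\sigma+\alpha-1),r}$. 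Since $\alpha>1$, each step gains $\alpha-1>0$ derivatives, reaching $\dot{W}^{s+\alpha,r}$ in finitely many iterations.

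For item (2), suppose for contradiction $u\in\dot{W}^{s+\alpha+\varepsilon,r}$. From item (1) applied at a large Sobolev index, together with Sobolev embedding, one has $u\in L^\infty$; Kato--Ponce and (\ref{Condition-A-Sobolev}) then give $u\,{\bf A}(u)\in\dot{W}^{s+\alpha+\varepsilon,r}$, hence $\text{div}(u\,{\bf A}(u))\in\dot{W}^{s+\alpha+\varepsilon-1,r}\subset\dot{W}^{s+\varepsilon,r}$, using $\alpha\geq 1$. Since also $(-\Delta)^{\alpha/2}u\in\dot{W}^{s+\varepsilon,r}$ directly, the equation forces $f\in\dot{W}^{s+\varepsilon,r}$, contradicting the hypothesis. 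The main obstacle will be the integrability-boost stage of item (1): the Sobolev exponents must be tracked carefully across iterations to reach $L^\infty$ in finitely many steps, with Lorentz or Besov refinements likely required at the critical endpoint. A secondary technicality is to justify the fractional Leibniz rule at non-integer $\sigma$ and verify that (\ref{Condition-A-Sobolev}) applies with the exact $(\sigma,r)$-pair needed at each iteration.
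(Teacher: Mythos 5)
Your overall skeleton (get $u\in L^\infty$, then bootstrap in increments of $\alpha-1$ via Kato--Ponce, (\ref{Condition-A}), (\ref{Condition-A-Sobolev}), then optimality by contradiction) matches the paper's Steps 2--4, but the crucial step $u\in L^\infty$ is exactly where your route has a genuine gap. Your stage 1 starts from $u\,{\bf A}(u)\in L^{p/2}$ and applies Hardy--Littlewood--Sobolev to the order-$(\alpha-1)$ operator $(-\Delta)^{-\alpha/2}\mathrm{div}$. Within the stated hypotheses ($\alpha>1$, $\max(1,\tfrac{n}{\alpha-1})<p<+\infty$) this fails in two regimes: (i) when $\alpha>\tfrac n2+1$ the admissible $p$ may satisfy $p\le 2$, and then $u\,{\bf A}(u)$ only lies in $L^m$ with $m\le p/2\le 1$ (interpolating with $L^{1,\infty}$ only lowers exponents), so the HLS/Young--O'Neil iteration cannot even be started; (ii) when $\alpha\ge n+1$ the kernel of $(-\Delta)^{-\alpha/2}\mathrm{div}$ is homogeneous of degree $\alpha-(n+1)\ge 0$, hence in no $L^{q,\infty}(\R^n)$, and the smoothing estimate you invoke is unavailable altogether. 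The paper's way around this is precisely its parabolic device: it constructs the unique local mild solution of the evolution problem in $\mathcal{C}_{*}([0,T_0],L^p(\R^n))$ with the smoothing bound $\sup_{0<t\le T_0}t^{\frac{n}{\alpha p}}\|v(t,\cdot)\|_{L^\infty}<+\infty$ (Proposition \ref{Prop-Evolution-Lp}) and identifies $v\equiv u$ by uniqueness (Proposition \ref{Prop-Global-Boundedness}); there the nonlinearity is always estimated as $\|w\|_{L^\infty}\|{\bf A}(w)\|_{L^p}$, never as a product of two $L^p$ functions, which is what makes the whole range $\alpha>1$ accessible. Note also that your Duhamel formula for $v(t)\equiv u$ ``evaluated at $t=1$'' silently presupposes this same identification of the stationary weak solution with the mild solution; the paper's uniqueness argument is what licenses it, and it must be proved, not assumed.

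Two smaller, repairable points. In item (2) the inclusion $\dot{W}^{s+\alpha+\varepsilon-1,r}\subset\dot{W}^{s+\varepsilon,r}$ is false for homogeneous Sobolev spaces (there is no monotonicity in the smoothness index); the paper instead writes $(-\Delta)^{\frac{s+\alpha+\varepsilon}{2}}(-\Delta)^{-\frac{\alpha}{2}}\mathrm{div}\big(u\,{\bf A}(u)\big)=(-\Delta)^{-\frac12}\mathrm{div}\big((-\Delta)^{\frac{s+1+\varepsilon}{2}}(u\,{\bf A}(u))\big)$ and shows the inner term is in $L^r$ by Kato--Ponce together with the lower-order information $u\in L^r\cap L^\infty$ from part (1); your argument should be closed the same way. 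Similarly, in the bootstrap the Leibniz estimate must be closed with finite exponents $\|{\bf A}(u)\|_{L^{r_2}}$, $r_2<\infty$, since ${\bf A}(\cdot)$ is not bounded on $L^\infty$; this is exactly where the hypothesis $u\in L^{1,\infty}(\R^n)$ (giving $u\in L^r$ for all $1<r<\infty$ once $u\in L^\infty$ is known) is consumed, and your sketch should make that explicit rather than leaving it as bookkeeping.
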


Let us briefly outline the proof strategy and make some remarks. We consider a weak $L^p\cap L^{1,\infty}$-solution to equation (\ref{Equation}). On the one hand, the \emph{a priori} information $u \in L^p(\R^n)$, with $\max\left(1,\frac{n}{\alpha-1}\right)<p<+\infty$, together with the parabolic framework of equation (\ref{Evolution-Equation-Intro}), allows us to establish that $u \in L^\infty(\R^n)$, which is a key element in the proof.

\medskip

On the other hand, the additional assumption that $u \in L^{1,\infty}(\R^n)$ is primarily technical. Specifically, a crucial tool in our proof is an estimate given by the fractional Leibniz rule, which is stated in Lemma \ref{Leibniz-rule} below. In this estimate, without the information that $u \in L^{1,\infty}(\R^n)$, we would be required to control the expression $\| A(u)\|_{L^\infty}$. However, in general, the singular integral operator ${\bf A}(\cdot)$ cannot be assumed to be a bounded operator on $L^{\infty}(\R^n)$ To circumvent this issue, we leverage the fact that $u \in L^{1,\infty}\cap L^\infty(\R^n)$, along with interpolation inequalities, to control the expression $\| A(u)\|_{L^r}$ for some $1<r<+\infty$. In this case, we can apply the natural boundedness assumption (\ref{Condition-A}) on the operator ${\bf A}(\cdot)$.

\medskip

Under this framework, given the initial regularity assumption on the external source in the first point above,  we apply a sharp bootstrap argument to the fixed-point equation (\ref{Fixed-Point-Intro}) to conclude that weak $L^{1,\infty}\cap L^p$-solutions to equation (\ref{Equation}) gain regularity and become $\dot{W}^{s+\alpha,r}$-solutions.

\medskip

For a given parameter $\alpha>1$,  this (expected) gain in fractional regularity arises from the effects of the fractional Laplacian operator $(-\Delta)^{\frac{\alpha}{2}}$ in equation (\ref{Equation}). It is worth emphasizing that this regularity gain is optimal due to the prescribed regularity $f\in \dot{W}^{s,r}(\R^n)$ and the assumption that $f\notin \dot{W}^{s+\varepsilon,r}(\R^n)$  for any $\varepsilon>0$.

\medskip

Theorem \ref{Th-Regularity} also yields the following corollary. Recall that for $k\in \mathbb{N}$ and $0<\sigma<1$, we denote by $\mathcal{C}^{k,\sigma}(\R^n)$ the  H\"older space of $\mathcal{C}^k$-functions whose derivatives of order $k$ are H\"older continuous functions with exponent $\sigma$.
 
\begin{Corollaire}\label{Corollary-Regularity} Under the same hypotheses as in Theorem \ref{Th-Regularity}, fix $r>n$ and  let $[s]$ denote the integer part of $s\geq 0$. Then, the weak solution $u \in L^{1,\infty} \cap  L^p(\R^n)$ to equation (\ref{Equation}) also satisfies $u \in \mathcal{C}^{[s], \sigma}(\R^n)$, where $\sigma=1-n/r$. In particular, in the homogeneous case when $f \equiv 0$, weak $L^{1,\infty}\cap L^p$-solutions to equation (\ref{Equation}) become $\mathcal{C}^{\infty}$-solutions.
\end{Corollaire}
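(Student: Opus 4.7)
The plan is to derive the pointwise H\"older regularity as a direct consequence of Theorem \ref{Th-Regularity} combined with the classical Sobolev--Morrey embedding in the super-critical range $r>n$.

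First, given $f\in \dot{W}^{-1,r}\cap \dot{W}^{s,r}(\R^n)$, point (1) of Theorem \ref{Th-Regularity} already produces
\[ u \in \dot{W}^{s+\alpha,r}(\R^n). \]
Since $\alpha>1$ and the fractional part of $s$ is nonnegative, we have $s+\alpha > [s]+1$. Combined with the \emph{a priori} information $u\in L^p(\R^n)$ (and, in fact, $u\in L^\infty(\R^n)$, which, as explained in the discussion after the statement of Theorem \ref{Th-Regularity}, follows from the parabolic framework behind equation (\ref{Evolution-Equation-Intro})), this places $u$ in the inhomogeneous Sobolev space $W^{[s]+1,r}(\R^n)$.

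Next, I would invoke the classical Morrey inequality: for $r>n$ and any integer $m\geq 1$,
\[ W^{m,r}(\R^n) \hookrightarrow \mathcal{C}^{m-1,\,1-n/r}(\R^n). \]
Applied with $m=[s]+1$, this immediately yields $u\in \mathcal{C}^{[s],\,1-n/r}(\R^n)$, which is the first assertion with $\sigma=1-n/r$. For the homogeneous case $f\equiv 0$, the hypothesis $f\in \dot{W}^{-1,r}\cap \dot{W}^{s,r}$ is trivially satisfied for every $s\geq 0$, so iterating the previous step for arbitrarily large $s$ produces $u\in \mathcal{C}^{k,\,1-n/r}(\R^n)$ for every integer $k\geq 0$, hence $u\in \mathcal{C}^{\infty}(\R^n)$.

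The only mildly delicate point is the passage from the homogeneous Sobolev estimate $u\in \dot{W}^{s+\alpha,r}$ to an inhomogeneous one that can be fed directly into Morrey's inequality; this is handled by combining the fractional regularity with the $L^p\cap L^\infty$ information on $u$ already established in the course of Theorem \ref{Th-Regularity}. Beyond this, the argument is a clean extraction of consequences of Theorem \ref{Th-Regularity}, and I do not anticipate any genuine obstacle.
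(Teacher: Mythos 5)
Your proposal is correct and follows essentially the same route as the paper: use Theorem \ref{Th-Regularity} plus the $L^r$-integrability of $u$ (which, note, comes from interpolating $u\in L^{1,\infty}\cap L^\infty$, not merely from $L^p\cap L^\infty$, since the fixed $r>n$ may be smaller than $p$) to reach integer-order Sobolev regularity $[s]+1$ at exponent $r>n$, then apply a Morrey-type embedding, and iterate in $s$ when $f\equiv 0$. The only cosmetic difference is that you invoke the classical embedding $W^{[s]+1,r}\hookrightarrow \mathcal{C}^{[s],1-n/r}$ directly, while the paper obtains the same conclusion through the Morrey-space criterion of Lemma \ref{Lem-Holder-Morrey} together with the embedding $L^{r}(\R^n)\subset \dot{M}^{1,r}(\R^n)$.
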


\medskip

On the other hand, as pointed out in \cite{Chamorro-Cortez,Tang}, the study of smoothing effects in the case where $0<\alpha\leq 1$ remains a far from obvious open question. Roughly speaking, the fractional powers $\alpha>1$ govern the one-derivative terms in the nonlinear expression $\text{div}(u \, {\bf A}(u))$. Consequently, the bootstrap argument used to prove Theorem \ref{Th-Regularity} breaks down for values $0<\alpha\leq 1$. Refer to Remark \ref{Rmk-regularity-2} for further details.

\medskip

To illustrate this issue, we consider the following toy model for equation (\ref{Equation})
\begin{equation}\label{Toy-Model}
(-\Delta)^{\frac{\alpha}{2}}u + (-\Delta)^{\frac{\beta}{2}}(u^2) = f, \qquad \alpha, \beta > 0,
\end{equation}
where the nonlinear effects of the term $u \, {\bf A}(u)$ are represented by the simpler expression $u^2$, and the fractional derivative operator $(-\Delta)^{\frac{\beta}{2}}$ replaces the divergence operator. Note that when $\beta=1$, both equations (\ref{Equation}) and (\ref{Toy-Model}) exhibit the same regularity properties for their solutions.

\medskip

In Appendix \ref{Appendix}, we prove that a result similar to Theorem \ref{Th-Regularity} holds for equation (\ref{Toy-Model}) when $0<\beta< \alpha \leq 1$. Essentially, we observe that the weaker nonlinear effects in this equation allow us to consider small values for the parameter $\alpha$.

\medskip

{\bf Organization of the article}.  In Section \ref{Sec:Preliminaries} below, we recall some well-known tools that we will use in the sequel. Section \ref{Sec:Th-Existence} is devoted to proving Proposition \ref{Th-Existence}, while in Section \ref{Sec:Th-Regularity}, we provide a proof of the main result stated in Theorem \ref{Th-Regularity}, and we give a proof of Corollary \ref{Corollary-Regularity}. Finally, in Appendix \ref{Appendix}, we discuss the toy model (\ref{Toy-Model}).

\medskip

{\bf Notation}. In the rest of the article, we will use a generic $C>0$, which may change from one line to another. Additionally, we will use the notations $\mathcal{F}(g)$  or $\widehat{g}$ to denote the Fourier transform of $g$.

\section{Preliminaries}\label{Sec:Preliminaries}
\emph{Some inequalities in the setting of Lorentz spaces}. We start by recalling  the Young inequalities. For a proof, we refer to  \cite[Section $1.4.3$]{Chamorro}.
\begin{Lemme}\label{Lem-Young} Let $1<p,p_1,p_2<+\infty$ and $1\leq q,q_1, q_2 \leq+\infty$. There exists a generic constant $C>0$ such that  the following estimates hold:  
\begin{enumerate}
\item $\ds{\Vert g \ast h \Vert_{L^{p,q}} \leq C_1 \Vert g \Vert_{L^{p_1, q_1}}\, \Vert h \Vert_{L^{p_2, q_2}}}$, with $1+\frac{1}{p}=\frac{1}{p_1}+\frac{1}{p_2}$, $\frac{1}{q}\leq \frac{1}{q_1}+\frac{1}{q_2}$,  and   $C_1=C\,p\left( \frac{p_1}{p_1-1}\right)\left( \frac{p_2}{p_2-1}\right)$.  

\medskip
\item $\ds{\Vert g \ast h \Vert_{L^{p,q}} \leq C_2 \Vert g \Vert_{L^1} \, \Vert h \Vert_{L^{p,q}}}$, with $C_2= C\, \frac{p^2}{p-1}$. 

\medskip

\item $\ds{\Vert g \ast h \Vert_{L^\infty} \leq C_3 \Vert g\Vert_{L^{p,q}}\, \Vert h \Vert_{L^{p',q'}}}$, with  $1=\frac{1}{p}+\frac{1}{p'}$, $1 \leq \frac{1}{q}+\frac{1}{q'}$ and $C_3= C \left(\frac{p}{p-1} \right)\left(\frac{p'}{p' -1} \right)$.
\end{enumerate} 
\end{Lemme}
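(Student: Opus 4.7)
The plan is to derive all three estimates from O'Neil's rearrangement inequality for convolutions, which states that for nonnegative measurable functions $g,h$ on $\mathbb{R}^n$,
\[
(g\ast h)^{**}(t) \;\leq\; t\, g^{**}(t)\, h^{**}(t) \;+\; \int_t^{+\infty} g^{*}(s)\, h^{*}(s)\, ds,
\]
where $f^{**}(t) := \frac{1}{t}\int_0^t f^{*}(s)\,ds$. The Lorentz quasi-norm $\|\cdot\|_{L^{p,q}}$ is equivalent to the analogous quantity defined with $f^{**}$ in place of $f^{*}$, with comparison constant proportional to $p/(p-1)$; this equivalence will be used freely to pass back and forth between the two formulations.

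For estimate (1), I would insert O'Neil's bound into the $L^{p,q}$ quasi-norm and split the resulting expression into two pieces. For the first piece, the scaling relation $\tfrac{1}{p}+1=\tfrac{1}{p_1}+\tfrac{1}{p_2}$ together with a Hölder-type inequality in the exponent $q$, under the constraint $\tfrac{1}{q}\leq \tfrac{1}{q_1}+\tfrac{1}{q_2}$, produces the product $\|g\|_{L^{p_1,q_1}}\|h\|_{L^{p_2,q_2}}$, each of the two norms picking up a factor of $p_i/(p_i-1)$ from the passage between $f^{*}$ and $f^{**}$. For the second piece, a tail integral, one applies Hardy's inequality to convert the convolution-in-$\log t$ structure into the same product of Lorentz norms; this step is the source of the extra $p$ factor in $C_1$. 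Collecting both contributions yields the stated constant.

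Estimate (2) is the endpoint $p_1=q_1=1$ of (1): the sharper bound $t\, g^{**}(t)\leq \|g\|_{L^1}$ collapses the first term of O'Neil's inequality to a pure multiple of $\|g\|_{L^1}\|h\|_{L^{p,q}}$, while a direct computation on the tail integral, again via Hardy's inequality, produces the factor $p^2/(p-1)$ and thus $C_2$. Estimate (3) is instead the pointwise bound
\[
|(g\ast h)(x)| \;\leq\; \int_{\mathbb{R}^n} |g(x-y)|\,|h(y)|\,dy,
\]
to which one applies Hölder's inequality in Lorentz spaces (valid under $\tfrac{1}{p}+\tfrac{1}{p'}=1$ and $\tfrac{1}{q}+\tfrac{1}{q'}\geq 1$, with constant proportional to $\tfrac{p}{p-1}\cdot\tfrac{p'}{p'-1}$), together with the translation invariance of the Lorentz norm to replace $\|g(x-\cdot)\|_{L^{p,q}}$ by $\|g\|_{L^{p,q}}$.

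Since the qualitative content of these three inequalities is standard, the main obstacle is tracking the explicit dependence of the constants on $p$, $p_1$, $p_2$ so as to recover the precise forms of $C_1$, $C_2$, $C_3$. This requires careful bookkeeping of each Hardy-type estimate and each passage between $f^{*}$ and $f^{**}$; any coarse application of real-interpolation arguments would spoil the optimal dependence and render the constants unusable in the contraction argument that drives Proposition \ref{Th-Existence}.
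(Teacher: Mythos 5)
The paper itself does not prove this lemma; it simply cites \cite[Section 1.4.3]{Chamorro}, and your route via O'Neil's inequality $(g\ast h)^{**}(t)\leq t\,g^{**}(t)h^{**}(t)+\int_t^{+\infty}g^{*}(s)h^{*}(s)\,ds$, the Hardy-type comparison $\|t^{1/p_i}f^{**}\|\leq \frac{p_i}{p_i-1}\|t^{1/p_i}f^{*}\|$, a Hardy inequality (constant $\sim p$) for the tail term, and the Lorentz--H\"older inequality for part (3) is precisely the standard argument behind that reference, so your proposal is correct and matches the intended proof. The only remark worth making is that in (2) the tail term is handled even more simply by monotonicity, $\int_t^{+\infty}g^{*}(s)h^{*}(s)\,ds\leq h^{*}(t)\|g\|_{L^1}$, which avoids Hardy altogether and yields a constant no larger than the stated $C\,p^2/(p-1)$, so the claimed bounds still follow.
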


Now, we state the following interpolation result. A proof can be consulted in \cite[Corollary $2.4.2$ and $2.4.5$]{Chamorro}.
\begin{Lemme}\label{Lem-Interpolation} Let $1\leq p_1<p_2\leq +\infty$, $1\leq q,q_1,q_2\leq +\infty$ and $0<\theta<1$. Then, the following inequalities hold:
\begin{enumerate}
    \item $\ds{\| g \|_{L^{p,q}} \leq C\, \| g\|^{1-\theta}_{L^{p_1,q_1}}\, \| g \|^{\theta}_{L^{p_2,q_2}}}$, \vspace{2mm}

    \medskip
    
    \item $\ds{  \| g \|_{L^{p,q}} \leq C\, \| g \|^{1-\theta}_{L^{p_1}}\,\|g \|^{\theta}_{L^{p_2}}}$, \vspace{2mm}
\end{enumerate}
where $C>0$ is a constant and  $\frac{1}{p}=\frac{1-\theta}{p_1}+\frac{\theta}{p_2}$.  
\end{Lemme}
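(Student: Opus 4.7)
The plan is to prove the stronger inequality
\[
\|g\|_{L^{p,q}} \leq C\, \|g\|_{L^{p_1,\infty}}^{1-\theta}\, \|g\|_{L^{p_2,\infty}}^\theta
\]
for any $q\in[1,\infty]$, and then deduce both items of the lemma from the continuous embeddings $L^{p_i,q_i}\subset L^{p_i,\infty}$ and $L^{p_i}=L^{p_i,p_i}\subset L^{p_i,\infty}$ recalled just before the statement. Item (2) is merely the specialization $q_i=p_i$ of item (1).

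The starting point is a pointwise bound on the decreasing rearrangement. Setting $M_i := \|g\|_{L^{p_i,\infty}}$ for $i=1,2$, the definition of the weak norm yields $g^*(t) \leq M_i\, t^{-1/p_i}$ for every $t>0$, hence
\[
g^*(t) \leq \min\!\bigl(M_1\, t^{-1/p_1},\ M_2\, t^{-1/p_2}\bigr).
\]
Since $p_1<p_2$, these two curves cross at $t_0 := (M_1/M_2)^{1/\delta}$, where $\delta := 1/p_1 - 1/p_2 > 0$: the second bound is sharper on $(0,t_0)$ and the first on $(t_0,\infty)$. The algebraic identities
\[
\tfrac{1}{p}-\tfrac{1}{p_2} = (1-\theta)\delta, \qquad \tfrac{1}{p_1}-\tfrac{1}{p} = \theta\delta,
\]
which follow from $1/p = (1-\theta)/p_1 + \theta/p_2$, are what makes the splitting add up.

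For the endpoint case $q=\infty$, writing $t^{1/p}\,g^*(t) = \bigl(t^{1/p_1}g^*(t)\bigr)^{1-\theta}\bigl(t^{1/p_2}g^*(t)\bigr)^{\theta}$ and taking the supremum in $t>0$ gives $\|g\|_{L^{p,\infty}}\leq M_1^{1-\theta}M_2^{\theta}$ in one line. For $q<\infty$, I would split the defining integral at $t_0$ and use the sharper of the two bounds on each piece:
\[
\|g\|_{L^{p,q}}^{q} \;\leq\; C\,\Bigl(M_2^{q}\!\int_0^{t_0}\! t^{q/p-q/p_2-1}\,dt \;+\; M_1^{q}\!\int_{t_0}^{\infty}\! t^{q/p-q/p_1-1}\,dt\Bigr).
\]
Both integrals converge thanks to $p_1<p<p_2$; the exponents evaluate to $q(1-\theta)\delta$ and $-q\theta\delta$ respectively, and substituting $t_0=(M_1/M_2)^{1/\delta}$ collapses each term to a constant multiple of $M_1^{q(1-\theta)}\,M_2^{q\theta}$. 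This yields the desired bound with $C=C(p_1,p_2,q,\theta)$.

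I do not anticipate a serious obstacle, as this is the classical real-interpolation inequality between weak-type Lorentz spaces. The only subtlety worth flagging is that the lemma imposes \emph{no} balance relation between $q$, $q_1$, and $q_2$; this is exactly why routing the estimate through the weakest norms $L^{p_i,\infty}$ on the right-hand side and then invoking the trivial Lorentz embeddings in the $q$-parameter is the cleanest path, avoiding any H\"older-type constraint of the form $1/q = (1-\theta)/q_1 + \theta/q_2$.
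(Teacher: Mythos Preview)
Your proposal is correct and complete; the splitting-at-$t_0$ computation on the decreasing rearrangement is exactly the classical direct argument, and routing through the weak norms $L^{p_i,\infty}$ is indeed the clean way to make the inequality hold with no constraint on the second indices $q,q_1,q_2$. The paper itself does not prove this lemma at all but simply cites \cite[Corollary~2.4.2 and~2.4.5]{Chamorro}, so there is nothing to compare against beyond noting that your self-contained argument is the standard one found in such references.
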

\emph{Estimates involving the fractional heat kernel}. Recall that the function $p_\alpha(t,x)$ denotes the fundamental solution of the fractional heat equation:
\begin{equation}\label{Fractional-Heat-Kernel}
\partial_t p_\alpha +(-\Delta)^{\frac{\alpha}{2}} p_\alpha = 0, \qquad p_\alpha(0,\cdot)=\delta_0, \quad \alpha>0,    
\end{equation}
where $\delta_0$ is the Dirac mass at the origin. 
\begin{Lemme}\label{Lem-Estim-Frac-Heat-Kernel} For any time $t>0$, the following estimates hold:
\begin{enumerate}
    \item $\ds{\| p_\alpha(t,\cdot)\|_{L^1}\leq C}$. 

    \medskip

    \item For any $1\leq q \leq +\infty$, we have $\ds{\left\| \vec{\nabla} p_\alpha(t,\cdot)\right\|_{L^q}\leq C\, t^{-\frac{1+n(1-1/q)}{\alpha}}}$.
 \end{enumerate}
\end{Lemme}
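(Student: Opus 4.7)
The plan is to exploit the scaling invariance of the fractional heat equation and reduce the two estimates to fixed-time ($t=1$) bounds on a single kernel. From the Fourier representation $\widehat{p_\alpha}(t,\xi) = e^{-t|\xi|^\alpha}$ and the change of variables $\xi \mapsto t^{1/\alpha}\xi$, one obtains the self-similarity
\[
p_\alpha(t,x) = t^{-n/\alpha}\, P_\alpha\bigl(t^{-1/\alpha} x\bigr), \qquad P_\alpha(\cdot) := p_\alpha(1,\cdot).
\]
A direct change of variables then gives $\|p_\alpha(t,\cdot)\|_{L^1} = \|P_\alpha\|_{L^1}$ (so (1) reduces to a time-independent bound), and
\[
\bigl\|\vec{\nabla} p_\alpha(t,\cdot)\bigr\|_{L^q} = t^{-\frac{1+n(1-1/q)}{\alpha}}\,\bigl\|\vec{\nabla} P_\alpha\bigr\|_{L^q},
\]
so (2) reduces to showing that $\vec{\nabla} P_\alpha \in L^q(\R^n)$ for every $1 \leq q \leq +\infty$.

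Next I would establish the two endpoint bounds $P_\alpha, \vec{\nabla}P_\alpha \in L^1\cap L^\infty(\R^n)$. The $L^\infty$ bounds follow immediately by taking absolute values inside the Fourier inversion formula, since $e^{-|\xi|^\alpha}$ and $\xi\, e^{-|\xi|^\alpha}$ belong to $L^1(\R^n)$ for every $\alpha>0$. The $L^1$ bounds are the analytical heart of the lemma; the cleanest route is to derive the classical pointwise decay
\[
|P_\alpha(x)| + |\vec{\nabla} P_\alpha(x)| \leq C\,(1+|x|)^{-n-\alpha},
\]
by splitting the Fourier integral into low- and high-frequency pieces (via a smooth cutoff of unit size), integrating by parts sufficiently many times on the high-frequency piece to extract the decay $|x|^{-n-\alpha}$, and bounding the low-frequency piece directly. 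With this pointwise decay the $L^1$ bounds are immediate, and then an $L^1$--$L^\infty$ interpolation gives $\vec{\nabla}P_\alpha \in L^q$ for every intermediate $q$.

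The main subtlety is tracking the limited smoothness of $e^{-|\xi|^\alpha}$ at $\xi = 0$ when $\alpha$ is not an even integer, which is exactly what prevents $P_\alpha$ from being Schwartz and forces the decay rate to be only polynomial of order $n+\alpha$. For the convenient subrange $0<\alpha\leq 2$, one can alternatively invoke Bochner's subordination principle to write $p_\alpha(t,\cdot)$ as a positive mixture of Gaussian heat kernels, which gives directly $p_\alpha \geq 0$ and $\|p_\alpha(t,\cdot)\|_{L^1} = 1$, and reduces the gradient estimate to the well-known Gaussian bound $\|\vec{\nabla}G_s\|_{L^q} \lesssim s^{-(1+n(1-1/q))/2}$ followed by integration against the subordinator measure. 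Either approach yields the stated estimates, and the scaling identity above then transfers them to arbitrary $t>0$.
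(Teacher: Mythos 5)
Your proposal is correct and follows essentially the same route as the paper: reduce to $t=1$ by the self-similar scaling $p_\alpha(t,x)=t^{-n/\alpha}P_\alpha(t^{-1/\alpha}x)$ and then use the polynomial pointwise decay of $P_\alpha$ and $\vec{\nabla}P_\alpha$ (of order $(1+|x|)^{-n-\alpha}$, resp.\ $(1+|x|)^{-n-\alpha-1}$) to get the $L^1$ and $L^q$ bounds. The only difference is that the paper simply quotes these pointwise estimates from Jacob's book, whereas you sketch their proof via a frequency-splitting/integration-by-parts argument (or subordination for $0<\alpha\leq 2$), which is a fine way to supply the cited fact.
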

\begin{proof} These estimates directly follow from the identity $p_\alpha(t,x)=\frac{1}{t^{\frac{1}{\alpha}}}P_\alpha\left( \frac{x}{t^{\frac{1}{\alpha}}}\right)$, where the function $P_\alpha(\cdot)$ is defined as the inverse Fourier transform of $e^{-|\xi|^\alpha}$. Additionally, we have the following pointwise estimates:
\begin{equation*}
    0<P_\alpha(x)\leq \frac{C}{(1+|x|)^{n+\alpha}}, \qquad |\vec{\nabla}P_\alpha(x)| \leq \frac{C}{(1+|x|)^{n+\alpha+1}}.
\end{equation*}
See \cite[Chapter $3$]{Jacob} for a proof of these well-known facts.
\end{proof}
\begin{Lemme}\label{Lem-Lebesgue-Besov-Embedding} Let $1<p<+\infty$. For any time $t>0$ the following estimate  holds:
\begin{equation}\label{Estim-Lebesgue-Besov}
    t^{\frac{n}{\alpha p}}\| p_\alpha(t,\cdot)\ast g \|_{L^\infty}\leq C\,\| g \|_{L^p}.
\end{equation}
\end{Lemme}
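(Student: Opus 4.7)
The plan is to prove the estimate by a direct application of Young's convolution inequality, combined with the scaling properties of the fractional heat kernel established in Lemma \ref{Lem-Estim-Frac-Heat-Kernel}. First, I would let $p'$ denote the conjugate exponent of $p$, so that $1/p + 1/p' = 1$, and apply Young's inequality in the form $L^{p'} \ast L^p \hookrightarrow L^\infty$:
\[ \| p_\alpha(t,\cdot) \ast g \|_{L^\infty} \leq \| p_\alpha(t,\cdot) \|_{L^{p'}} \, \| g \|_{L^p}. \]

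The remaining task is then to show that $\| p_\alpha(t,\cdot) \|_{L^{p'}} \leq C\, t^{-\frac{n}{\alpha p}}$. For this, I would use the self-similar identity
\[ p_\alpha(t,x) = t^{-\frac{n}{\alpha}} P_\alpha\!\left( \frac{x}{t^{1/\alpha}} \right), \]
which follows, as in the proof of Lemma \ref{Lem-Estim-Frac-Heat-Kernel}, from the fact that $P_\alpha$ is the inverse Fourier transform of $e^{-|\xi|^\alpha}$. Performing the change of variables $y = x/t^{1/\alpha}$ in the $L^{p'}$-norm yields
\[ \| p_\alpha(t,\cdot) \|_{L^{p'}} = t^{-\frac{n}{\alpha}\left(1 - \frac{1}{p'}\right)}\, \| P_\alpha \|_{L^{p'}} = t^{-\frac{n}{\alpha p}}\, \| P_\alpha \|_{L^{p'}}. \]
The $L^{p'}$-norm of $P_\alpha$ is finite thanks to the pointwise bound $|P_\alpha(x)| \leq C/(1+|x|)^{n+\alpha}$ recalled in the proof of Lemma \ref{Lem-Estim-Frac-Heat-Kernel}, since $p'(n+\alpha) > n$ for every $1 \leq p' \leq +\infty$. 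Substituting into the Young inequality and multiplying both sides by $t^{\frac{n}{\alpha p}}$ yields the desired estimate (\ref{Estim-Lebesgue-Besov}).

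There is no real obstacle here: the argument is essentially bookkeeping of the scaling exponent $n/(\alpha p)$, and the only ingredient beyond Young's inequality is the fast decay of the profile $P_\alpha$, which is already invoked in the preceding lemma. The one subtle point to double-check is the sign of the exponent in the self-similar identity: the correct normalization gives $t^{-n/\alpha}$ in front of $P_\alpha$, so that $\| p_\alpha(t,\cdot) \|_{L^1} = \| P_\alpha \|_{L^1}$ is indeed independent of $t$ and consistent with part (1) of Lemma \ref{Lem-Estim-Frac-Heat-Kernel}.
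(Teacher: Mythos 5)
Your proof is correct, and it takes a genuinely different route from the paper. The paper deduces the estimate from the continuous embedding $L^p(\R^n)\subset B^{-n/p}_{\infty,\infty}(\R^n)$ together with the equivalence of the Besov norm in its thermic characterization when the classical heat kernel is replaced by the fractional one (citing Lemari\'e-Rieusset); your argument is instead elementary and self-contained: Young's inequality $\| p_\alpha(t,\cdot)\ast g\|_{L^\infty}\leq \|p_\alpha(t,\cdot)\|_{L^{p'}}\|g\|_{L^p}$ plus the self-similar scaling of the kernel, with $\|P_\alpha\|_{L^{p'}}<+\infty$ guaranteed by the pointwise decay $0<P_\alpha(x)\leq C(1+|x|)^{-(n+\alpha)}$ already invoked in the proof of Lemma \ref{Lem-Estim-Frac-Heat-Kernel}. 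The bookkeeping is right: $1-\tfrac{1}{p'}=\tfrac1p$ gives exactly $\|p_\alpha(t,\cdot)\|_{L^{p'}}=t^{-\frac{n}{\alpha p}}\|P_\alpha\|_{L^{p'}}$, and $p'(n+\alpha)>n$ holds for all $p'\geq 1$ since $\alpha>0$. Your remark on the normalization is also well taken: the correct self-similar identity is $p_\alpha(t,x)=t^{-n/\alpha}P_\alpha(x/t^{1/\alpha})$ (the prefactor $t^{-1/\alpha}$ written in the proof of Lemma \ref{Lem-Estim-Frac-Heat-Kernel} is only accurate for $n=1$), and your consistency check against $\|p_\alpha(t,\cdot)\|_{L^1}=\|P_\alpha\|_{L^1}$ confirms it. What each approach buys: yours avoids any Besov-space machinery and external references, while the paper's embedding argument places the estimate in a framework that extends naturally to rougher data (e.g.\ $g$ in a negative-order Besov space) and reuses the general equivalence of thermic characterizations for the fractional semigroup.
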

\begin{proof} First,  recall that the continuous embedding holds $L^p(\R^n)\subset B^{-n/p}_{\infty,\infty}(\R^n)$. See, for instance, \cite[Page $171$]{PGLR1}. Here,   this non-homogeneous Besov space is characterized as the space of temperate distributions $g$ verifying $\ds{\| g\|_{B^{-n/p}_{\infty,\infty}}= \sup_{t>0}\, t^{\frac{n}{2p}}\| h_t \ast g \|_{L^\infty} <+\infty}$, where $h_t$ denotes the classical heat kernel. 

\medskip

Thereafter, from \cite[Page $9$]{PGLR2}, we have the following equivalence $\ds{\| g\|_{B^{-n/p}_{\infty,\infty}}\sim \sup_{t>0}\, t^{\frac{n}{\alpha p}}\| p_\alpha(t,\cdot)\ast g \|_{L^\infty}}$, from which the wished estimate follows. 
\end{proof}

\emph{Other useful estimates}. We will use the following fractional version of the  Leibniz rule. The proof of this estimate can be consulted in \cite{grafakos2014p} or \cite{naibo2019p}.
\begin{Lemme}\label{Leibniz-rule} Let $\alpha>0$, $1<p<+\infty$ and $1<p_0, p_1, q_0, q_1 \leq +\infty$. Then, there exist $C>0$ such that the following estimate holds:
\begin{equation*}
\left\| (-\Delta)^{\frac{\alpha}{2}}(gh) \right\|_{L^p} \leq C\, \left\| (-\Delta)^{\frac{\alpha}{2}} g \right\|_{L^{p_1}}\, \| h \|_{L^{p_2}}+C\, \| g \|_{L^{q_1}}\, \left\| (-\Delta)^{\frac{\alpha}{2}} h \right\|_{L^{q_2}},    
\end{equation*}
where $\frac{1}{p}=\frac{1}{p_1}+\frac{1}{p_2}=\frac{1}{q_1}+\frac{1}{q_2}$. 
\end{Lemme}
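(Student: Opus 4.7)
My plan is to prove the fractional Leibniz inequality via Littlewood--Paley theory and Bony's paraproduct decomposition, which is the standard route in the Kato--Ponce / Grafakos--Oh framework. Fix a Littlewood--Paley partition of unity and write $g=\sum_{j}\Delta_{j}g$, $h=\sum_{k}\Delta_{k}h$, where $\Delta_{j}$ localizes to frequencies $|\xi|\sim 2^{j}$ and $S_{j}=\sum_{\ell\leq j-1}\Delta_{\ell}$ denotes the low-frequency cutoff. Bony's decomposition gives
\[
gh \;=\; \Pi_{g}h + \Pi_{h}g + \mathrm{R}(g,h),
\]
with $\Pi_{g}h=\sum_{j}S_{j-3}g\,\Delta_{j}h$ (low--high), $\Pi_{h}g=\sum_{j}S_{j-3}h\,\Delta_{j}g$ (high--low), and $\mathrm{R}(g,h)=\sum_{|j-k|\leq 2}\Delta_{j}g\,\Delta_{k}h$ (high--high). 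The idea is that applying $(-\Delta)^{\alpha/2}$ to each piece makes the derivative fall on the higher-frequency factor, producing the two terms on the right-hand side of the statement.

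First I would treat $\Pi_{g}h$. Since $S_{j-3}g\cdot\Delta_{j}h$ has Fourier support in an annulus $|\xi|\sim 2^{j}$, one has the symbol-type identity $(-\Delta)^{\alpha/2}\Pi_{g}h=\sum_{j}S_{j-3}g\cdot(-\Delta)^{\alpha/2}\Delta_{j}h$ up to a harmless multiplier. The Littlewood--Paley square function characterization of $L^{p}$ together with Hölder (using $\tfrac{1}{p}=\tfrac{1}{q_{1}}+\tfrac{1}{q_{2}}$) then yields
\[
\|\Pi_{g}h\|_{\dot{W}^{\alpha,p}} \;\lesssim\; \bigl\| \sup_{j}|S_{j-3}g|\cdot \bigl(\sum_{j}|(-\Delta)^{\alpha/2}\Delta_{j}h|^{2}\bigr)^{1/2}\bigr\|_{L^{p}}\;\lesssim\;\|g\|_{L^{q_{1}}}\,\|(-\Delta)^{\alpha/2}h\|_{L^{q_{2}}},
\]
invoking the maximal-function bound $\sup_{j}|S_{j-3}g|\leq C\mathcal{M}g$ and the boundedness of $\mathcal{M}$ on $L^{q_{1}}$ (which is where $q_{1}>1$ is used). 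The symmetric argument handles $\Pi_{h}g$ and produces the first term $\|(-\Delta)^{\alpha/2}g\|_{L^{p_{1}}}\|h\|_{L^{p_{2}}}$.

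The main obstacle is the remainder $\mathrm{R}(g,h)$. The frequency support of $\Delta_{j}g\,\Delta_{k}h$ with $|j-k|\leq 2$ is contained in a ball of radius $\sim 2^{j}$ rather than an annulus, so the cancellation coming from $(-\Delta)^{\alpha/2}$ must be recovered by hand. I would further decompose $(-\Delta)^{\alpha/2}\mathrm{R}(g,h)=\sum_{\ell}\Delta_{\ell}(-\Delta)^{\alpha/2}\mathrm{R}(g,h)$ and use that only terms with $2^{\ell}\lesssim 2^{j}$ contribute nontrivially; writing $(-\Delta)^{\alpha/2}\Delta_{\ell}=2^{\alpha\ell}\widetilde{\Delta}_{\ell}$ with a bounded Fourier multiplier, one gains the factor $2^{\alpha\ell}$, which can then be absorbed by either $2^{\alpha j}\Delta_{j}g$ or $2^{\alpha k}\Delta_{k}h$ (one bounds $2^{\alpha\ell}\leq 2^{\alpha j}$ and then splits into the two cases depending on which factor is "kept"). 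A vector-valued Hölder inequality and the Fefferman--Stein inequality on $L^{p}(\ell^{2})$ then produce bounds of the same form as the two target terms, splitting the high--high contribution according to which factor the derivative is placed on.

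Finally I would assemble the three estimates into the stated inequality, noting that the hypothesis $1<p_{1},q_{2}\leq\infty$ together with $p_{2},q_{1}>1$ (needed to run the maximal function bounds on the low-frequency factor) is precisely what makes the endpoint cases work via the Fefferman--Stein inequality; the borderline case $p_{1}=\infty$ or $q_{2}=\infty$ is absorbed into the square-function estimate using that $L^{\infty}$ acts on $L^{p}(\ell^{2})$ by pointwise multiplication. The technical crux is thus the resonance block $\mathrm{R}(g,h)$, where the symbol is not annulus-supported and one must carefully relocalize in frequency to recover the Leibniz structure.
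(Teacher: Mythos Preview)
Your sketch is a faithful outline of the Grafakos--Oh paraproduct proof and is essentially correct; it coincides with what the cited references \cite{grafakos2014p,naibo2019p} do. Note, however, that the paper itself does not supply any argument for this lemma: it merely records the estimate and defers entirely to those references. So there is no ``paper's own proof'' to compare against---your proposal is simply a detailed reconstruction of the proof in the cited literature, which is fine, but be aware that in the context of this paper the lemma is treated as a black-box tool rather than something to be established.
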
 

Finally, we will  use the following result linking  Morrey spaces and the H\"older regularity of functions. For a proof, we refer to \cite[Proposition $3.4$]{Giga}. Recall that, for $1<p<+\infty$, the Morrey space $\dot{M}^{1,p}(\R^n)$ consists of locally finite Borel measures $d\mu$ such that 
\[ \sup_{x_0\in \R^n\, R>0} R^{\frac{n}{p}}\left( \int_{|x-x_0|<R} d |\mu|(x) \right) <+\infty.\]
Additionally, for $p<q\leq +\infty$,  the following embedding holds: $L^{p}(\R^n)\subset L^{p,q}(\R^n)\subset  \dot{M}^{1,p}(\R^n)$. 
\begin{Lemme}\label{Lem-Holder-Morrey} Let $g \in \mathcal{S}'(\R^n)$ be such that $\vec{\nabla} g \in \dot{M}^{1,p}(\R^n)$, with $n<p<+\infty$. Then, for $0<\sigma:=1-n/p<1$, it follows that $g \in \mathcal{C}^{0,\sigma}(\R^n)$. Specifically, there exists a constant $C>0$ such that for any $x,y \in \R^n$ the following inequality holds: $|g(x)-g(y)|\leq C\, \| \vec{\nabla} g\|_{\dot{M}^{1,p}}\, |x-y|^{\sigma}$.
\end{Lemme}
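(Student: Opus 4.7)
The plan is to exploit the fact that, up to an additive constant (irrelevant for $g(x) - g(y)$), one has the integral representation
\[ g(x) = c_n \int_{\R^n} K(x-z)\cdot d\vec{\nabla} g(z), \qquad K(w) := \frac{w}{|w|^n}, \]
obtained by inverting $-\Delta$ and taking one derivative of the Newtonian kernel. Since $\vec{\nabla} g$ is only a Morrey measure, I would first regularize by mollification $g_\varepsilon = g\ast \rho_\varepsilon$ — convolution with a probability density does not enlarge the Morrey norm of the gradient — establish the estimate with constants independent of $\varepsilon$, and pass to the limit using local $L^1$ convergence $g_\varepsilon \to g$ (up to additive constants).

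Set $r := |x - y|$ and split the identity
\[ g(x) - g(y) = c_n \int_{\R^n} \bigl[K(x-z) - K(y-z)\bigr] \cdot d\vec{\nabla} g(z) \]
into three regions: the ball near $x$, $\{|x-z| \leq 2r\}$; the ball near $y$, $\{|y-z| \leq 2r\}$; and the far region where both $|x-z|, |y-z| > 2r$. On the two near pieces I would use the crude pointwise bound $|K(w)| \leq |w|^{1-n}$ separately on each term and decompose dyadically into annuli $\{2^{-k-1}(2r) < |x-z| \leq 2^{-k}(2r)\}$; invoking the Morrey estimate $|\vec{\nabla} g|(B_R) \leq C R^{n-n/p}\,\|\vec{\nabla} g\|_{\dot{M}^{1,p}}$ on each annulus produces a geometric series of common ratio $2^{-(1-n/p)} < 1$ (this is where $p>n$ enters), summing to $C r^{1 - n/p}\|\vec{\nabla} g\|_{\dot{M}^{1,p}}$. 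On the far region, a mean value estimate gives $|K(x-z) - K(y-z)| \leq C\, r/|x-z|^{n}$, and a dyadic decomposition into annuli $\{2^k r < |x-z| \leq 2^{k+1} r\}$ outside $B(x, 2r)$ combined again with the Morrey bound yields a convergent series of the form $\sum_k 2^{-kn/p}$, summing again to $C r^{1 - n/p}\|\vec{\nabla} g\|_{\dot{M}^{1,p}}$.

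The main technical obstacle is precisely the justification of the integral representation when $g$ is only a tempered distribution and $\vec{\nabla} g$ is a measure; my remedy is the mollification argument described above, which transfers the uniform Hölder inequality on the smooth approximations to $g$ itself because both $\|\vec{\nabla} g\|_{\dot{M}^{1,p}}$ and the statement to prove are stable under convolution with probability densities, and because $g(x)-g(y)$ is recovered as a pointwise limit of $g_\varepsilon(x) - g_\varepsilon(y)$ via local $L^1$ convergence together with the (uniform in $\varepsilon$) continuity of the mollifications. Adding the three regional contributions then produces $|g(x) - g(y)| \leq C\, |x-y|^{1 - n/p}\, \|\vec{\nabla} g\|_{\dot{M}^{1,p}}$, yielding the claim with $\sigma = 1 - n/p$.
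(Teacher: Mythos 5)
The paper itself does not prove this lemma; it simply cites \cite[Proposition 3.4]{Giga}, so your attempt has to be judged on its own terms. The quantitative core of what you propose is fine: the dyadic decomposition into annuli, the growth bound $|\vec{\nabla} g|(B_R)\leq C\,R^{\,n-n/p}\,\| \vec{\nabla} g\|_{\dot{M}^{1,p}}$, the crude bound $|K(w)|\lesssim |w|^{1-n}$ on the two near balls and the mean-value bound $|K(x-z)-K(y-z)|\lesssim |x-y|\,|x-z|^{-n}$ in the far region do yield exactly $C\,|x-y|^{1-n/p}\,\| \vec{\nabla} g\|_{\dot{M}^{1,p}}$, with $p>n$ giving convergence of the near series and $n/p>0$ that of the far one.

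The genuine gap is at the step you yourself flag: the identification of $g(x)-g(y)$ with the potential integral. First, the formula $g(x)=c_n\int K(x-z)\cdot d\vec{\nabla} g(z)$ as written does not converge absolutely at infinity (the tail is of size $\int^{+\infty} R^{-n/p}\,dR=+\infty$ when $p>n$), so only the renormalized difference $h(x)-h(y):=\int \big[K(x-z)-K(y-z)\big]\cdot d\vec{\nabla} g(z)$ is meaningful; second, and more importantly, the identity $g(x)-g(y)=h(x)-h(y)$ is exactly what has to be proved, and mollification does not settle it: even for a smooth $g$ with $\vec{\nabla} g$ in the Morrey class, $g$ and the renormalized potential $h$ can a priori differ by a nonconstant harmonic function, and your scheme never excludes this. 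A standard repair is: check $\Delta h=\Delta g$ in $\mathcal{S}'(\R^n)$ by Fubini on test functions, observe that $h$ grows at most like $|x|^{\sigma}$ and hence is tempered, so $g-h$ is a harmonic tempered distribution, i.e. a polynomial $P$; then pair $\vec{\nabla} P=\vec{\nabla} g-\vec{\nabla} h$ with $R^{-n}\varphi(\cdot/R)$ and let $R\to+\infty$: the Morrey bound makes the first term $o(1)$ and the $\mathcal{C}^{0,\sigma}$ bound on $h$ makes the second $O(R^{\sigma-1})$, forcing $P$ to be constant. Alternatively, you can bypass global representations altogether with the classical Morrey-lemma route: for the mollified function, the local sub-representation $\big|g(x)-\frac{1}{|B|}\int_B g\,dz\big|\leq C\int_B |x-z|^{1-n}\, d|\vec{\nabla} g|(z)$ on $B=B(x,2|x-y|)$, applied at $x$ and at $y$, requires only the local Morrey information and passes to the limit exactly as you describe. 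Without one of these ingredients the representation step, and hence the proof, is incomplete.
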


\section{Proof of Proposition \ref{Th-Existence}}\label{Sec:Th-Existence}
First, note that equation (\ref{Equation}) can be rewritten as the following (equivalent) fixed-point problem:
\begin{equation}\label{Fixed-point}
u = - (-\Delta)^{-\frac{\alpha}{2}} \text{div}\big(u\, {\bf A}(u)\big) + (-\Delta)^{-\frac{\alpha}{2}} f := T_\alpha(u).
\end{equation}
In the following technical lemma, we begin by studying the operator $\ds{- (-\Delta)^{-\frac{\alpha}{2}}\text{div}(\cdot)}$ in the nonlinear term above. Recall that this operator acts on vector fields. So, to state this lemma, we define the vector  field ${\bf v}=(\varphi_1, \cdots, \varphi_n)$, where $\varphi_j \in \mathcal{S}(\R^n)$ for   $j=1,\cdots, n$.

\begin{Lemme}\label{Lem-Kernel} For any $1<\alpha<n+1$, the action of operator $- (-\Delta)^{-\frac{\alpha}{2}}\text{div}(\cdot)$ is equivalently given by the product of convolution with the kernel $K_\alpha=(K_{\alpha,1},\cdots, K_{\alpha,n})$:
\begin{equation}\label{Identity-Kernel}
 - (-\Delta)^{-\frac{\alpha}{2}}\text{div}({\bf v})= K_\alpha \ast {\bf v}:=\sum_{j=1}^{n}K_{\alpha,j}\ast \varphi_j, \qquad \text{where} \quad    K_{\alpha,j}\in L^1_{loc}\cap L^{\frac{n}{(n+1)-\alpha},\infty} (\R^n).
\end{equation}
Moreover, there exists a constant  $C_{K}=C_K(\alpha,n)>0$, depending on the parameter $\alpha$ and the  dimension $n$, such that
\begin{equation}\label{Estim-Lorentz-Kernel}
    \| K_\alpha \|_{L^{\frac{n}{(n+1)-\alpha},\infty}}\leq C_{K}<+\infty. 
\end{equation}
\end{Lemme}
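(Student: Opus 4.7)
The plan is to pass to the Fourier side, identify the symbol of the operator componentwise, and then recognize each $K_{\alpha,j}$ as the inverse Fourier transform of a homogeneous symbol. Taking the Fourier transform of $-(-\Delta)^{-\alpha/2}\mathrm{div}({\bf v})$ with ${\bf v}=(\varphi_1,\dots,\varphi_n)$ yields
\begin{equation*}
\mathcal{F}\!\left( -(-\Delta)^{-\frac{\alpha}{2}}\mathrm{div}({\bf v})\right)(\xi) \;=\; \sum_{j=1}^{n}\left(-\frac{i\,\xi_j}{|\xi|^{\alpha}}\right)\widehat{\varphi_j}(\xi),
\end{equation*}
so the identity in (\ref{Identity-Kernel}) will hold provided we define $K_{\alpha,j}$ to be the tempered distribution with Fourier transform $-i\xi_j/|\xi|^{\alpha}$.

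Next I would identify $K_{\alpha,j}$ explicitly. Since $1<\alpha<n+1$ we can write $-i\xi_j/|\xi|^{\alpha}$, up to a dimensional constant, as $\partial_j$ applied to the symbol $|\xi|^{-\alpha}$ in a slightly shifted form; equivalently, invoking the classical Riesz potential formula $\mathcal{F}^{-1}(|\xi|^{-\alpha}) = c_{n,\alpha}\,|x|^{\alpha-n}$ (valid in the distributional sense for $0<\alpha<n$, which we can arrange in two cases $1<\alpha<n$ and $n\le\alpha<n+1$ by a standard regularization/analytic continuation argument) and differentiating, I obtain
\begin{equation*}
K_{\alpha,j}(x) \;=\; c'_{n,\alpha}\,\frac{x_j}{|x|^{\,n+2-\alpha}}.
\end{equation*}
In particular $K_{\alpha,j}$ is smooth away from the origin and homogeneous of degree $\alpha-n-1$. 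The bound $|K_{\alpha,j}(x)|\le C\,|x|^{\alpha-n-1}$ then makes the local integrability immediate: $\int_{|x|\le 1}|x|^{\alpha-n-1}\,dx<\infty$ iff $\alpha>1$, which is our standing assumption.

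For the Lorentz bound (\ref{Estim-Lorentz-Kernel}), the key observation is the general fact that for $\beta>0$ the function $x\mapsto |x|^{-\beta}$ belongs to $L^{n/\beta,\infty}(\R^n)$, because
\begin{equation*}
d_{|\cdot|^{-\beta}}(\lambda) \;=\; |\{x:|x|^{-\beta}>\lambda\}| \;=\; |\{x:|x|<\lambda^{-1/\beta}\}| \;=\; c_n\,\lambda^{-n/\beta},
\end{equation*}
so that $(|\cdot|^{-\beta})^{*}(t)=(c_n/t)^{\beta/n}$ and $\sup_{t>0}t^{\beta/n}(|\cdot|^{-\beta})^{*}(t)<\infty$. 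Applying this with $\beta=n+1-\alpha$ (which lies in $(0,n)$ precisely because $1<\alpha<n+1$) gives $p=n/(n+1-\alpha)$ and the desired bound $\|K_{\alpha,j}\|_{L^{n/(n+1-\alpha),\infty}}\le C_K$ with $C_K$ depending only on $\alpha$ and $n$.

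The main subtlety I anticipate is the rigorous justification of the distributional identities when $\alpha$ is close to $n$, since the Riesz kernel formula $\mathcal{F}^{-1}(|\xi|^{-\alpha})=c_{n,\alpha}|x|^{\alpha-n}$ must be interpreted carefully at the borderline. One clean way around this is to bypass the potential altogether and work directly with the homogeneous symbol: the inverse Fourier transform of any smooth function on $\R^n\setminus\{0\}$ that is homogeneous of degree $k\in(-n,0)$ is itself a locally integrable function homogeneous of degree $-n-k$, with an $L^\infty$ bound on the unit sphere controlled by the symbol; applying this to the symbol $-i\xi_j/|\xi|^\alpha$ (homogeneous of degree $1-\alpha\in(-n,0)$) directly yields the pointwise estimate $|K_{\alpha,j}(x)|\le C(\alpha,n)\,|x|^{\alpha-n-1}$, which then feeds into the Lorentz estimate above without any regularization.
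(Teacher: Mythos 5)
Your proposal is correct and, in its final form, takes essentially the same route as the paper: read off the componentwise symbol $\mp\, i\xi_j|\xi|^{-\alpha}$, use that a symbol smooth on $\R^n\setminus\{0\}$ and homogeneous of degree $1-\alpha\in(-n,0)$ has an inverse Fourier transform smooth away from the origin and homogeneous of degree $\alpha-(n+1)$, and deduce both $K_{\alpha,j}\in L^1_{loc}$ and the $L^{\frac{n}{(n+1)-\alpha},\infty}$ bound from the pointwise estimate $|K_{\alpha,j}(x)|\leq C|x|^{\alpha-(n+1)}$. The initial detour through the explicit Riesz-potential kernel is unnecessary (and, as you yourself note, delicate for $n\leq\alpha<n+1$), whereas your explicit distribution-function computation showing $|x|^{-\beta}\in L^{n/\beta,\infty}(\R^n)$ usefully fills in a step the paper leaves implicit.
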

\begin{proof}
By a direct computation in the Fourier variable, we obtain
\begin{equation*}
    \mathcal{F}\Big(  - (-\Delta)^{-\frac{\alpha}{2}}\text{div}({\bf v}) \Big)(\xi)=\sum_{j=1}^{n} \frac{\textbf{i} \xi_j}{|\xi|^{\alpha}}\, \widehat{\varphi}_j (\xi),
\end{equation*}
where we define $\widehat{K}_{\alpha,j}(\xi)=\frac{\textbf{i} \xi_j}{|\xi|^{\alpha}}$. Since  $\widehat{K}_{\alpha,j}\in \mathcal{C}^{\infty}(\R^n \setminus \{0\})$ is a homogeneous function of degree $1-\alpha$, it follows that  $K_{\alpha,j}\in \mathcal{C}^{\infty}(\R^n \setminus \{0\})$  is a homogeneous function  of degree $\alpha-(n+1)$.  Since $\alpha>1$ it holds $-n<\alpha-(n+1)$, which implies 
that $K_{\alpha,i}\in L^1_{loc}(\R^n)$. Additionally, for any  $x\neq 0$, the following  pointwise estimate holds: 
\[ | K_\alpha (x)| \lesssim |x|^{\alpha-(n+1)}, \]
which yields the  estimate (\ref{Estim-Lorentz-Kernel}). 
\end{proof}

\medskip

Using this lemma, we first prove the existence of an $L^{\frac{n}{\alpha-1},\infty}$-solution to the fixed-point problem (\ref{Fixed-point}).

\begin{Lemme}\label{Prop-Existence-Lorentz} Let $1<\alpha<n/2+1$.   Assume that the operator ${\bf A}(\cdot)$ satisfies the boundedness condition (\ref{Condition-A}). Additionally, assume that $(-\Delta)^{-\frac{\alpha}{2}} f \in L^{\frac{n}{\alpha-1},\infty}(\R^n)$. On the other hand, let $R>0$ be the parameter introduced in (\ref{Control-source}). 

\medskip

There exists a universal  constant $\eta_1=\eta_1(\alpha,n)>0$,  also depending on $\alpha$ and $n$, such that if 
\begin{equation}\label{Control-1-f}
   R\leq \eta_1,
\end{equation}
 then the fixed-point problem (\ref{Fixed-point}) has a solution $u \in L^{\frac{n}{\alpha-1},\infty}(\R^n)$. Moreover,  this solution is uniquely determined and satisfies  $\| u - (-\Delta)^{-\frac{\alpha}{2}}f  \|_{L^{\frac{n}{\alpha-1},\infty}} \leq  R$.
\end{Lemme}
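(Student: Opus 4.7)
The plan is to apply the Banach contraction principle to the mapping $T_\alpha$ defined in (\ref{Fixed-point}) on the closed affine ball
\[
B_R = \left\{ u \in L^{\frac{n}{\alpha-1},\infty}(\R^n)\ :\ \left\| u - (-\Delta)^{-\frac{\alpha}{2}}f \right\|_{L^{\frac{n}{\alpha-1},\infty}} \leq R \right\}.
\]
First I would use Lemma \ref{Lem-Kernel} to rewrite the nonlinear part of $T_\alpha$ as a convolution, so that
\[
T_\alpha(u) - (-\Delta)^{-\frac{\alpha}{2}}f = K_\alpha \ast \bigl( u\, {\bf A}(u) \bigr),
\]
with $K_\alpha \in L^{\frac{n}{(n+1)-\alpha},\infty}(\R^n)$ having norm bounded by the constant $C_K$ of (\ref{Estim-Lorentz-Kernel}).

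The core of the proof is a single Young--H\"older estimate. Applying the Young inequality of Lemma \ref{Lem-Young} with $p = \frac{n}{\alpha-1}$, $p_1 = \frac{n}{(n+1)-\alpha}$ and $q_1 = q = \infty$, the scaling relation $1+\frac{1}{p} = \frac{1}{p_1}+\frac{1}{p_2}$ forces $p_2 = \frac{n}{2(\alpha-1)}$. The requirement $p_2 > 1$ in Young's inequality is precisely the reason for the restriction $\alpha < \frac{n}{2}+1$ in the hypothesis. This yields
\[
\left\| K_\alpha \ast (u\, {\bf A}(u)) \right\|_{L^{\frac{n}{\alpha-1},\infty}} \leq C\, C_K\, \left\| u\, {\bf A}(u) \right\|_{L^{\frac{n}{2(\alpha-1)},\infty}}.
\]
Next, by H\"older's inequality in Lorentz spaces (with both factors in $L^{\frac{n}{\alpha-1},\infty}$) together with the boundedness assumption (\ref{Condition-A}) specialized to $p_1 = \frac{n}{\alpha-1}$ and $q_1 = \infty$ (which gives ${\bf A}(0)=0$ and Lipschitz continuity), we deduce
\[
\left\| u\, {\bf A}(u) \right\|_{L^{\frac{n}{2(\alpha-1)},\infty}} \leq C\, \|u\|_{L^{\frac{n}{\alpha-1},\infty}}\, \|{\bf A}(u)\|_{L^{\frac{n}{\alpha-1},\infty}} \leq C\, C_A\, \|u\|_{L^{\frac{n}{\alpha-1},\infty}}^2.
\]

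For stability of $B_R$, using $\|u\|_{L^{\frac{n}{\alpha-1},\infty}} \leq 2R$ on $B_R$ together with (\ref{Control-1-f}), the previous bound gives $\|T_\alpha(u) - (-\Delta)^{-\frac{\alpha}{2}}f\|_{L^{\frac{n}{\alpha-1},\infty}} \leq C_1\, R^2$, for an explicit constant $C_1 = C_1(\alpha,n)$. Hence $T_\alpha(B_R) \subset B_R$ provided $C_1 R \leq 1$. For the contraction estimate, I would write
\[
T_\alpha(u) - T_\alpha(v) = - (-\Delta)^{-\frac{\alpha}{2}} \mathrm{div}\bigl( (u-v)\, {\bf A}(u) + v\, ({\bf A}(u)-{\bf A}(v)) \bigr),
\]
and repeat the same Young/H\"older scheme on each of the two pieces, using the Lipschitz estimate in (\ref{Condition-A}) on the second piece, to obtain
\[
\|T_\alpha(u) - T_\alpha(v)\|_{L^{\frac{n}{\alpha-1},\infty}} \leq C_2\, R\, \|u-v\|_{L^{\frac{n}{\alpha-1},\infty}}.
\]
Choosing $\eta_1 = \eta_1(\alpha,n) > 0$ small enough that both $C_1 \eta_1 \leq 1$ and $C_2 \eta_1 < 1$ hold, the Banach fixed point theorem on $B_R$ produces the unique solution claimed.

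The only genuinely delicate step is matching the Lorentz exponents so that Young's inequality applies: the kernel lives in $L^{\frac{n}{(n+1)-\alpha},\infty}$, which forces the auxiliary exponent $\frac{n}{2(\alpha-1)}$ on the product $u\,{\bf A}(u)$, and the admissibility $\frac{n}{2(\alpha-1)} > 1$ is exactly the upper bound on $\alpha$. The remaining work is bookkeeping on constants, together with an appeal to H\"older's inequality in Lorentz spaces (a standard consequence of O'Neil's inequality) which is not explicitly stated in Section \ref{Sec:Preliminaries} but is classical.
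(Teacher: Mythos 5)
Your proposal is correct and follows essentially the same route as the paper: a Banach fixed-point argument on the ball $B_R$ centered at $(-\Delta)^{-\frac{\alpha}{2}}f$, using Lemma \ref{Lem-Kernel} together with the Lorentz-space Young inequality with exponents $p=\frac{n}{\alpha-1}$, $p_1=\frac{n}{(n+1)-\alpha}$, $p_2=\frac{n}{2(\alpha-1)}$ (whose admissibility $p_2>1$ is exactly the restriction $\alpha<\frac{n}{2}+1$, as the paper notes in Remark \ref{Rmk-1}), then H\"older and the hypothesis (\ref{Condition-A}), with the same bilinear decomposition for the contraction estimate and the same smallness choice of $\eta_1$.
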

\begin{proof}
We will use Banach's contraction principle to construct a solution to equation (\ref{Fixed-point}). To achieve this, we consider the radius $R>0$ introduced above, which will be suitably fixed later as in expression (\ref{Control-1-f}). Therefore, we define the closed ball in the space $L^{\frac{n}{\alpha-1},\infty}(\R^n)$:
\begin{equation}\label{Ball}
 B_R:= \Big\{ v \in L^{\frac{n}{\alpha-1},\infty}(\R^n): \, \| v- (-\Delta)^{-\frac{\alpha}{2}}f \big\|_{L^{\frac{n}{\alpha-1},\infty}} \leq R \Big\}.   
\end{equation}

 We begin by proving that the nonlinear operator $T_\alpha(\cdot)$, defined in expression (\ref{Fixed-point}), maps the ball $B_R$ into itself. By identity (\ref{Identity-Kernel}), for any $v \in B_R$, we write
\begin{equation*}
    \| T_\alpha(v)-(-\Delta)^{-\frac{\alpha}{2}}f \|_{L^{\frac{n}{\alpha-1},\infty}} = \left\| -(-\Delta)^{\frac{\alpha}{2}}\text{div}(v\, {\bf A}(v))\right\|_{L^{\frac{n}{\alpha-1},\infty}}= \| K_\alpha \ast (v\, {\bf A}(v)) \|_{L^{\frac{n}{\alpha-1},\infty}}.
\end{equation*}
To control the last term,  in the first point of Lemma \ref{Lem-Young} we fix the parameters  $p=\frac{n}{\alpha-1}$, $p_1= \frac{n}{(n+1)-\alpha}$, $p_2=\frac{n}{2(\alpha-1)}$ and $q=q_1=q_2=+\infty$, to obtain 
\begin{equation}\label{Estim-Young-Kernel}
  \| K_\alpha \ast (v\, {\bf A}(v)) \|_{L^{\frac{n}{\alpha-1},\infty}} \leq C_1\, \| K_\alpha \|_{L^{\frac{n}{(n+1)-\alpha},\infty}}\, \| v\, {\bf A}(v)\|_{L^{\frac{n}{2(\alpha-1)},\infty}}. \end{equation}
 \begin{Remarque}\label{Rmk-1} The constraint $\alpha<n/2+1$ yields that $p_2=\frac{n}{2(\alpha-1)}>1$. 
 \end{Remarque}
Then, using the estimate (\ref{Estim-Lorentz-Kernel}), along with H\"older inequalities and the assumption on the operator ${\bf A}(\cdot)$ given in (\ref{Condition-A}), we can write  
 \begin{equation*}
\begin{split}
   &\,  C_1\, \| K_\alpha \|_{L^{\frac{n}{(n+1)-\alpha},\infty}}\, \| v\, {\bf A}(v)\|_{L^{\frac{n}{2(\alpha-1)},\infty}} \\
   \leq &\, C_1\, C_{K}\, \| v \|_{L^{\frac{n}{\alpha-1},\infty}}\, \| {\bf A}(v)\|_{L^{\frac{n}{\alpha-1},\infty}}\\
   \leq &\, C_1\, C_{K}\,C_A\, \| v \|^2_{L^{\frac{n}{\alpha-1},\infty}}\\
   = &\, C_{\alpha,n}\, \| v \|^2_{L^{\frac{n}{\alpha-1},\infty}}, 
  \end{split}
\end{equation*}
where the constant 
\begin{equation}\label{C-alpha-n}
 C_{\alpha,n}:=C_1\, C_{K}\,C_A>0,   
\end{equation} essentially depends on the fractional power  $\alpha$ and the dimension $n$. 

\medskip

Gathering these estimates, we obtain
\begin{equation}\label{Estim-NonLin-Lorentz} 
    \| K_\alpha \ast (v\, {\bf A}(v)) \|_{L^{\frac{n}{\alpha-1},\infty}} \leq C_{\alpha,n}\, \| v \|^2_{L^{\frac{\alpha}{n-1},\infty}}.
\end{equation}

Now, since $v\in B_R$, we can write
\begin{equation*}
   \begin{split} 
  C_{\alpha,n}\, \| v \|^2_{L^{\frac{\alpha}{n-1},\infty}} \leq &\, C_{\alpha,n}\, \left( \| v - (-\Delta)^{-\frac{\alpha}{2}} f \|^2_{L^{\frac{n}{\alpha-1},\infty}} + \| (-\Delta)^{-\frac{\alpha}{2}} f\|_{L^{\frac{n}{\alpha-1},\infty}} \right)^2\\
   \leq &\, C_{\alpha,n}\, \left( R+ \| (-\Delta)^{-\frac{\alpha}{2}} f\|_{L^{\frac{n}{\alpha-1},\infty}} \right)^2.
  \end{split}
  \end{equation*}
Additionally,  from estimate (\ref{Control-source}) we obtain 
\begin{equation*}
    C_{\alpha,n}\, \left( R+ \| (-\Delta)^{-\frac{\alpha}{2}} f\|_{L^{\frac{n}{\alpha-1},\infty}} \right)^2 \leq  C_{\alpha,n}\, 4R^2.
\end{equation*}
Then, for the constant $C_{\alpha,n}$ given in (\ref{C-alpha-n}), we define the constant 
\begin{equation}\label{eta1}
 \eta_1:= \frac{1}{8 C_{\alpha,n}},  
\end{equation}
and we fix the radius $R$ as in (\ref{Control-1-f}). This implies  the control
\begin{equation*}
  C_{\alpha,n}\, 4R^2 \leq  R,  
\end{equation*}
yielding the desired estimate $\ds{\| T_\alpha(v)- (-\Delta)^{-\frac{\alpha}{2}} f \|_{L^{\frac{n}{\alpha-1},\infty}}\leq R}$. 

\medskip

Now, we must verify that the operator $T_\alpha(\cdot):\, B_R \to B_R$ is contractive. For $v_1, v_2 \in B_R$, from the definition of $T_\alpha(\cdot)$ given in (\ref{Fixed-point}) and the identity (\ref{Identity-Kernel}),  using the previous estimates (\ref{Estim-Lorentz-Kernel}) and (\ref{Estim-Young-Kernel}), we write
\begin{equation*}
    \begin{split}
        \| T_\alpha(v_1)-T_\alpha(v_2) \|_{L^{\frac{n}{\alpha-1},\infty}}= &\,  \left\| K_\alpha \ast \big(v_1\, {\bf A}(v_1)-v_2\, {\bf A}(v_2)\big)\right\|_{L^{\frac{n}{\alpha-1},\infty}}\\
        \leq &\,C_1\, \| K_\alpha \|_{L^{\frac{n}{(n+1)-\alpha},\infty}}\, \| v_1\, {\bf A}(v_1)-v_2\, {\bf A}(v_2) \|_{L^{\frac{n}{2(\alpha-1)},\infty}}\\
        \leq &\, C_1\, C_{K}\, \| v_1\, {\bf A}(v_1)-v_2\, {\bf A}(v_2) \|_{L^{\frac{n}{2(\alpha-1)},\infty}}.
    \end{split}
\end{equation*}
Thereafter, using H\"older inequalities and  the assumptions given in (\ref{Condition-A}) and (\ref{Control-source}), we obtain
\begin{equation*}
\begin{split}
   &\,  \| v_1\, {\bf A}(v_1)-v_2\, {\bf A}(v_2) \|_{L^{\frac{n}{2(\alpha-1)},\infty}}  \\
   \leq &\, \| v_1({\bf A}(v_1)-{\bf A}(v_2))+ (v_1-v_2)\, {\bf A}(v_2)\|_{L^{\frac{n}{2(\alpha-1)},\infty}}\\
   \leq &\, C_A \left(\| v_1\|_{L^{\frac{n}{(\alpha-1)},\infty}}+\| v_2\|_{L^{\frac{n}{(\alpha-1)},\infty}}\right)\, \| v_1-v_2\|_{L^{\frac{n}{(\alpha-1)},\infty}}\\
   \leq &\, C_A \left( \| v_1-(-\Delta)^{-\frac{\alpha}{2}}f\|_{L^{\frac{n}{(\alpha-1)},\infty}}+ \| v_2-(-\Delta)^{-\frac{\alpha}{2}}f\|_{L^{\frac{n}{(\alpha-1)},\infty}}+2 \|(-\Delta)^{-\frac{\alpha}{2}}f \|_{L^{\frac{n}{\alpha-1},\infty}}\right) \times\\
   &\, \times \| v_1-v_2\|_{L^{\frac{n}{(\alpha-1)},\infty}}\\
   \leq &\, C_A \left( 2R +2 \|(-\Delta)^{-\frac{\alpha}{2}}f \|_{L^{\frac{n}{\alpha-1},\infty}}\right)\, \| v_1-v_2\|_{L^{\frac{n}{(\alpha-1)},\infty}}\\
   \leq &\, C_A \, 4R\, \| v_1-v_2\|_{L^{\frac{n}{(\alpha-1)},\infty}}.
   \end{split}
\end{equation*}

Gathering these estimates and using again the constant $C_{\alpha,n}$ introduced in (\ref{C-alpha-n}), we find that
\begin{equation*}
 \| T_\alpha(v_1)-T_\alpha(v_2) \|_{L^{\frac{n}{\alpha-1},\infty}} \leq C_{\alpha,n}\, 4R\, \| v_1-v_2\|_{L^{\frac{n}{(\alpha-1)},\infty}}.  
\end{equation*}
But, recalling that the radius $R$ was fixed in (\ref{Control-1-f}) where $\eta_1$ defined in (\ref{eta1}), we have   $\ds{C_{\alpha,n} 4R \leq \frac{1}{2}}$, 
which implies that $T_\alpha(\cdot): \, B_R \to B_R$ is contractive.  From now on, Proposition \ref{Prop-Existence-Lorentz} follows from well-known arguments. 
\end{proof}

Having constructed this $L^{\frac{n}{\alpha-1},\infty}$-solution to equation (\ref{Fixed-point}), and under additional assumptions on the external source $f$, we will prove that $u \in L^p(\R^n)$.
 
\begin{Lemme}\label{Prop-Existence-Lp} Under the same hypothesis of Proposition \ref{Prop-Existence-Lorentz}, for $\frac{n}{(n+1)-\alpha}<p<+\infty$, assume that $(-\Delta)^{-\frac{\alpha}{2}}f \in L^p(\R^n)$. On the other hand, let $R>0$ be the parameter introduced in (\ref{Control-source}) and fixed as in (\ref{Control-1-f}).

\medskip

There exists a universal constant $\eta_2=\eta_2(\alpha,n)>0$, depending only on $\alpha$ and $n$, such that if $R$ additionally verifies
\begin{equation}\label{Control-R-2}
 R\leq  \eta_2,
\end{equation}
then  it holds that $u \in L^p(\R^n)$.
\end{Lemme}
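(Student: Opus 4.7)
The plan is to repeat the Banach contraction argument of Lemma \ref{Prop-Existence-Lorentz}, but in a \emph{combined} ball controlling both the $L^{\frac{n}{\alpha-1},\infty}$-norm and the $L^p$-norm around $(-\Delta)^{-\frac{\alpha}{2}}f$. Concretely, define
\[ B'_R := \Big\{ v \in L^{\frac{n}{\alpha-1},\infty}\cap L^p(\R^n): \, \| v-(-\Delta)^{-\frac{\alpha}{2}}f\|_{L^{\frac{n}{\alpha-1},\infty}} \leq R, \ \| v-(-\Delta)^{-\frac{\alpha}{2}}f\|_{L^p} \leq R \Big\}. \]
Since the Lorentz-norm stability and contractivity have already been established in Lemma \ref{Prop-Existence-Lorentz}, the only new task is the analogous estimate in the $L^p$-norm for $T_\alpha(v)-(-\Delta)^{-\frac{\alpha}{2}}f = K_\alpha\ast(v\,{\bf A}(v))$. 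Once $T_\alpha$ is shown to preserve and contract $B'_R$, uniqueness of the fixed point in $B_R\supset B'_R$ forces it to coincide with the $u$ of Lemma \ref{Prop-Existence-Lorentz}, giving $u\in L^p(\R^n)$.

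For the $L^p$ estimate, I would first apply the Young inequality of Lemma \ref{Lem-Young} (item $1$) with $p_1=\frac{n}{(n+1)-\alpha}$ and $p_2$ determined by $1+\frac{1}{p}=\frac{(n+1)-\alpha}{n}+\frac{1}{p_2}$, i.e.\ $\frac{1}{p_2}=\frac{1}{p}+\frac{\alpha-1}{n}$. The hypothesis $p>\frac{n}{(n+1)-\alpha}$ is exactly what ensures $p_2>1$, so Young's inequality applies and, together with (\ref{Estim-Lorentz-Kernel}), gives
\[ \| K_\alpha \ast (v\,{\bf A}(v))\|_{L^p} \leq C\, C_K\, \| v\,{\bf A}(v)\|_{L^{p_2}}. \]
Next I would invoke Hölder's inequality in Lorentz spaces with the split $\frac{1}{p_2}=\frac{1}{p}+\frac{\alpha-1}{n}$, and the boundedness assumption (\ref{Condition-A}) applied with parameters $\frac{n}{\alpha-1}$, $\infty$, to obtain
\[ \| v\,{\bf A}(v)\|_{L^{p_2}} \leq C\, \| v\|_{L^p}\, \|{\bf A}(v)\|_{L^{\frac{n}{\alpha-1},\infty}} \leq C\, C_A\, \| v\|_{L^p}\, \| v\|_{L^{\frac{n}{\alpha-1},\infty}}. \]
Combining these two displays yields a constant $\widetilde C_{\alpha,n,p}>0$ such that
\[ \| T_\alpha(v)-(-\Delta)^{-\frac{\alpha}{2}}f\|_{L^p} \leq \widetilde C_{\alpha,n,p}\, \| v\|_{L^p}\, \| v\|_{L^{\frac{n}{\alpha-1},\infty}}. \]

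To close the argument I would use the triangle inequality and (\ref{Control-source}) to bound $\| v\|_{L^p}\leq 2R$ and $\| v\|_{L^{\frac{n}{\alpha-1},\infty}}\leq 2R$ for $v\in B'_R$, obtaining $\| T_\alpha(v)-(-\Delta)^{-\frac{\alpha}{2}}f\|_{L^p}\leq 4\widetilde C_{\alpha,n,p} R^2$, which is $\leq R$ provided $R\leq\eta_2:=1/(8\widetilde C_{\alpha,n,p})$. Contractivity follows identically by splitting $v_1{\bf A}(v_1)-v_2{\bf A}(v_2) = v_1\big({\bf A}(v_1)-{\bf A}(v_2)\big)+(v_1-v_2)\,{\bf A}(v_2)$ and applying the same Young/Hölder chain to each piece, reproducing the factor $4\widetilde C_{\alpha,n,p}R\leq \tfrac12$. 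Taking $R\leq\min(\eta_1,\eta_2)$ and combining with the Lorentz estimates of Lemma \ref{Prop-Existence-Lorentz} gives a unique fixed point in $B'_R$, which must be $u$ by the uniqueness in the larger ball $B_R$.

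The main subtlety is the interplay of indices: one must verify that the exponent $p_2$ produced by Young's inequality against the scale-critical kernel $K_\alpha\in L^{\frac{n}{(n+1)-\alpha},\infty}$ matches precisely the one arising from Hölder's inequality against ${\bf A}(v)\in L^{\frac{n}{\alpha-1},\infty}$, both giving $\frac{1}{p_2}=\frac{1}{p}+\frac{\alpha-1}{n}$, and that this is admissible ($1<p_2<+\infty$) exactly when $p>\frac{n}{(n+1)-\alpha}$. This scale-matching is the reason for the precise lower bound on $p$ appearing in the statement, and any attempt to lower $p$ to the endpoint would require working with the weak space $L^{\frac{n}{(n+1)-\alpha},\infty}$ as suggested in the remark following Proposition \ref{Th-Existence}.
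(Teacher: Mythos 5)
Your approach (rerunning the contraction in the intersection ball, or equivalently bounding the Picard iterates in $L^p$) is structurally fine as far as it goes, but it misses the one genuine difficulty the statement contains: the smallness threshold must be \emph{universal}, $\eta_2=\eta_2(\alpha,n)$, independent of $p$. The constant you produce, $\widetilde C_{\alpha,n,p}$, comes from the Young inequality of Lemma \ref{Lem-Young} whose constant is $C\,p\left(\frac{p_1}{p_1-1}\right)\left(\frac{p_2}{p_2-1}\right)$; with $p_1=\frac{n}{(n+1)-\alpha}$ fixed and $\frac{1}{p_2}=\frac{1}{p}+\frac{\alpha-1}{n}$, this constant blows up both as $p\to\frac{n}{(n+1)-\alpha}$ (where $p_2\to 1$) and as $p\to+\infty$. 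Hence defining $\eta_2:=1/(8\widetilde C_{\alpha,n,p})$ yields a threshold depending on $p$, which proves a strictly weaker statement than the lemma claims; this is exactly the obstruction the paper isolates in Remark \ref{Rmk-2}, where it is stressed that one cannot simply impose $R\leq \frac{1}{4\,C_1(p)\,C_K\,C_A}$.

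The paper's proof is built around resolving this: it splits the admissible range of $p$ into $I_1=\left(\frac{n}{(n+1)-\alpha},2\right)$, $I_2=\left[2,\frac{3n}{\alpha-1}\right]$, $I_3=\left(\frac{3n}{\alpha-1},+\infty\right)$. On the compact interval $I_2$ the Young constant $C_1(p)$ is continuous, so $M_\alpha=\sup_{p\in I_2}C_1(p)<+\infty$ and $\eta_2:=\frac{1}{4M_\alpha C_K C_A}$ depends only on $\alpha,n$; there the iterative estimate on the Picard sequence closes. For $p\in I_1$ and $p\in I_3$ no new smallness is used at all: the solution is first placed in suitable $L^q$ spaces with $q\in I_2$ (by interpolating the hypothesis on $(-\Delta)^{-\frac{\alpha}{2}}f$), and then the nonlinear term $K_\alpha\ast(u\,{\bf A}(u))$ is shown to lie in $L^{\frac{n}{(n+1)-\alpha},\infty}$ (via the second point of Lemma \ref{Lem-Young}) respectively in $L^\infty$ (via the third point), after which interpolation with the $L^{\frac{n}{\alpha-1},\infty}$ bound and the fixed-point identity give $u\in L^p$. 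To repair your argument you would need to add such a mechanism (a uniform-in-$p$ bound on the relevant constant on a compact range, plus an interpolation/bootstrap step for the extreme ranges of $p$); as written, the proposal does not deliver a constant $\eta_2$ depending only on $\alpha$ and $n$. A minor secondary point: Hölder in Lorentz spaces applied to $v\in L^{p}=L^{p,p}$ and ${\bf A}(v)\in L^{\frac{n}{\alpha-1},\infty}$ gives $v\,{\bf A}(v)\in L^{p_2,p}$ rather than the strong space $L^{p_2}$ (since $p>p_2$), so you should either keep the second index $p$ and feed it into Young with $q_2=p$, or argue as the paper does with a weak second index; this is cosmetic, unlike the dependence of $\eta_2$ on $p$.
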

\begin{proof} Recall that the solution $u$ to equation (\ref{Fixed-point}),  given by Proposition \ref{Prop-Existence-Lorentz}, is obtained as the limit in the strong topology of the space $L^{\frac{n}{\alpha-1},\infty}(\R^n)$:
\begin{equation*}
    u=\lim_{n\to +\infty} u_n,  \qquad u_n \in B_R,
\end{equation*}
where the closed  ball $B_R$ was defined in expression (\ref{Ball}) and,  for any $n\in \mathbb{N}$, the term $u_n$ is defined by 
following iterative formula:
\begin{equation}\label{Iterative-Formula}
\begin{cases}\vspace{2mm}
u_n := T_\alpha(u_{n-1}), \quad n\geq 1, \\
u_0:= (-\Delta)^{-\frac{\alpha}{2}}f. 
\end{cases}
\end{equation}
In particular, from the control assumed in (\ref{Control-source}) the following uniform bound holds: 
\begin{equation}\label{Uniform-Bound-Sequence}
 \| u_{n}\|_{L^{\frac{n}{\alpha-1},\infty}} \leq R +\|(-\Delta)^{-\frac{\alpha}{2}}f \|_{L^{\frac{n}{\alpha-1},\infty}} \leq 2R,   
\end{equation}
which we will use below. 

\medskip

In this context, we will prove that the additional control  (\ref{Control-R-2})  imply  that the sequence $(u_n)_{n\in \mathbb{N}}$ is uniformly bounded in the strong topology of the $L^p$-space.

\medskip

For $n=0$, it directly follows that $u_0 \in L^p(\R^n)$. Then, for $n\geq 1$, using the definition of the operator $T_\alpha(\cdot)$, given in (\ref{Fixed-point}), together with the identity (\ref{Identity-Kernel}) and the assumption (\ref{Control-source}),  we write
\begin{equation*}
\begin{split}
    \| u_n \|_{L^p} = &\, \| T_\alpha(u_{n-1})\|_{L^p} \leq \left\| (-\Delta)^{-\frac{\alpha}{2}}\text{div}\big(u_{n-1}\,{\bf A}(u_{n-1})\big)\right\|_{L^p}+\| (-\Delta)^{-\frac{\alpha}{2}}f \|_{L^p} \\
    \leq &\, \left\| K_\alpha \ast \big(u_{n-1}\,{\bf A}(u_{n-1})\big) \right\|_{L^p}+ R.
    \end{split}
\end{equation*}
Here, we must control the term $\left\| K_\alpha \ast \big(u_{n-1}\,{\bf A}(u_{n-1})\big) \right\|_{L^p}$. In the first point of Lemma \ref{Lem-Young}, we fix the parameters $p_1=\frac{n}{(n+1)-\alpha}$, $p_2=\frac{np}{n+p(\alpha-1)}$, $q=p$,  $q_1=+\infty$ and $q_2=p$. Additionally,  using the estimate (\ref{Estim-Lorentz-Kernel}), we obtain
\begin{equation*}
\begin{split}
    \left\| K_\alpha \ast \big(u_{n-1}\,{\bf A}(u_{n-1})\big) \right\|_{L^p} \leq &\,  C_1\, \| K_\alpha \|_{L^{\frac{n}{(n+1)-\alpha},\infty}}\, \| u_{n-1}\,{\bf A}(u_{n-1})\|_{L^{\frac{np}{n+p(\alpha-1)},\infty}}\\
    \leq &\, C_1\, C_{K}\, \| u_{n-1}\,{\bf A}(u_{n-1})\|_{L^{\frac{np}{n+p(\alpha-1)},\infty}}.
 \end{split}
\end{equation*}
\begin{Remarque}\label{Rmk-2} Since  $p_1=\frac{n}{(n+1)-\alpha}$ and  $p_2=\frac{np}{n+p(\alpha-1)}$, the constant $C_1$ above is explicitly computed as the following function of the parameter $p$:
\begin{equation}\label{Expression-C1} 
    C_1= \, C\,p \left( \frac{p_1}{p_1 -1} \right)\, \left(\frac{p_2}{p_2-1}\right)=\, C\, p \left( \frac{n}{\alpha-1} \right)\left( \frac{np}{p\big((n+1)-\alpha\big)-n} \right):=C_1(p).
\end{equation}
Consequently, the  condition $0<C_1(p)<+\infty$ implies the constraint on $p$ given in the statement of Proposition \ref{Prop-Existence-Lp}:
\begin{equation*}
\frac{n}{(n+1)-\alpha} < p < +\infty.    
\end{equation*}
\end{Remarque}

Thereafter, using H\"older inequalities and the assumption (\ref{Condition-A}) on the operator ${\bf A}(\cdot)$, we write
\begin{equation*}
\begin{split}
 &\,    C_1(p)\, C_{K}\, \| u_{n-1}\,{\bf A}(u_{n-1})\|_{L^{\frac{np}{n+p(\alpha-1)},\infty}} \\
 \leq &\,  C_1(p)\, C_{K}\, \|u_{n-1}\|_{L^p}\, \| {\bf A}(u_{n-1}) \|_{L^{\frac{n}{\alpha-1},\infty}}\\
    \leq &\,  C_1(p)\, C_{K}\, C_A\,  \|u_{n-1}\|_{L^p}\, \| u_{n-1} \|_{L^{\frac{n}{\alpha-1},\infty}}.
  \end{split}
\end{equation*}
Finally, we use the uniform control given in (\ref{Uniform-Bound-Sequence}) to obtain 
\begin{equation*}
  \left\| K_\alpha \ast \big(u_{n-1}\,{\bf A}(u_{n-1})\big) \right\|_{L^p} \leq \,  C_1(p)\, C_{K}\, C_A\, 2R \, \| u_{n-1}\|_{L^p}. 
\end{equation*}

Thus, for any $n\geq 1$, we obtain the following iterative estimate:
\begin{equation}\label{Iterative}
    \| u_{n}\|_{L^p} \leq   C_1(p)\, C_{K}\, C_A\, 2R\, \| u_{n-1}\|_{L^p}+ R.
\end{equation}

At this point, we need to control the expression $\ds{C_1(p)\, C_{K}\, C_A\, 2R}$. Specifically, we will need  the new constraint   on the radius $R$: 
\begin{equation}\label{Control-R-tech}
 C_1(p)\, C_{K}\, C_A\, 2R \leq \frac{1}{2}. 
\end{equation}
However, returning to Remark \ref{Rmk-2}, recall  that for any $\frac{n}{(n+1)-\alpha} < p < +\infty$, the expression  $C_1(p)$ is  a function  on  $p$.  Consequently,  one cannot directly assume the inequality $R \leq \frac{1}{4\,C_1(p)\, C_{K}\, C_A}$. To overcome this technical difficulty, we will consider the following cases for the parameter  $p$. 

\medskip

Since $1<\alpha < n/2+1$, it follows that $\frac{n}{(n+1)-\alpha}<2<\frac{n}{\alpha-1}$. Therefore, we can split the interval 
\[ \left( \frac{n}{(n+1)-\alpha}, +\infty \right)= \left(\frac{n}{(n+1)-\alpha}, 2 \right)\cup \left[2, \frac{3n}{\alpha-1} \right]\cup \left( \frac{3n}{\alpha-1}, +\infty \right):= I_1 \cup I_2\cup I_3. \]
Additionally, we begin by considering the interval $I_2$, since further computations on the other intervals are based on those in this  compact interval.

\begin{itemize}
    \item The case $p \in I_2$. Since $I_2$ is compact and the expression $C_1(p)$, given in (\ref{Expression-C1}), is continuous in the variable $p$, let $\ds{M_\alpha:= \sup_{p\in I_2}C_1(p)<+\infty}$. Then, we can assume the following bound as stated in (\ref{Control-R-2}):
    \begin{equation*}
         R \leq  \frac{1}{4\,M_\alpha\, C_{K}\, C_A}:=\eta_2,
    \end{equation*}
    from which the inequality (\ref{Control-R-tech}) holds for any $p\in I_2$. Having this inequality, returning to the   estimate  (\ref{Iterative}), we obtain 
    \begin{equation*}
      \| u_{n}\|_{L^p} \leq \frac{1}{2}\, \| u_{n-1}\|_{L^p}+R.      
    \end{equation*}

By iterating this estimate, it follows that 
\begin{equation*}
    \| u_{n}\|_{L^p} \leq \left( \sum_{j=0}^{+\infty}\frac{1}{2^j} \right)\, R.
\end{equation*}
Consequently, the sequence $(u_n)_{n\in \mathbb{N}}$ defined in (\ref{Iterative-Formula}) is uniformly bounded in the space $L^p(\R^n)$, which implies  that $u\in L^p(\R^n)$ for $p\in I_2$. 

\medskip

\item The case $p\in I_1$. Recall that we have assumed $(-\Delta)^{-\frac{\alpha}{2}}f \in L^p(\R^n)$. Then, by the assumption $(-\Delta)^{-\frac{\alpha}{2}} f \in L^{\frac{n}{\alpha-1},\infty}(\R^n)$ and since $p<2<\frac{n}{\alpha-1}$, the first point of Lemma \ref{Lem-Interpolation}  yields that $(-\Delta)^{-\frac{\alpha}{2}}f \in L^2(\R^n)$. Additionally, by the assumption (\ref{Control-source}) we also have $\| (-\Delta)^{-\frac{\alpha}{2}}f \|_{L^2}\leq R$.  Thus, by the previous case, it also holds that $u \in L^2(\R^n)$.

\medskip

With this information at our disposal, we will prove that $\ds{-(-\Delta)^{-\frac{\alpha}{2}}\text{div}(u \, {\bf A}(u))\in L^p(\R^n)}$. In fact, first note that by the assumption (\ref{Condition-A}) on the operator ${\bf A}(\cdot)$, it follows that ${\bf A}(u) \in L^2(\R^n)$, and  hence $u \, {\bf A}(u) \in L^1(\R^n)$. Additionally, using the identity (\ref{Identity-Kernel}) and the fact that $K_\alpha \in L^{\frac{n}{(n+1)-\alpha},\infty}(\R^n)$, we can apply  the second point of Lemma \ref{Lem-Young}, which gives us that   $\ds{K_\alpha\ast (u \, {\bf A}(u))\in L^{\frac{n}{(n+1)-\alpha},\infty}(\R^n)}$. 

\medskip

On the other hand, recall that by the estimate (\ref{Estim-NonLin-Lorentz}), we also have $K_\alpha\ast (u \, {\bf A}(u))\in L^{\frac{n}{\alpha-1},\infty}(\R^n)$. Then, by the first point of Lemma \ref{Lem-Interpolation}, for $p\in I_1$ we obtain $K_\alpha\ast (u \, {\bf A}(u)) \in L^p(\R^n)$. Finally, since $u$ verifies the equation (\ref{Fixed-point}), we conclude that   $u \in L^p(\R^n)$.  

\medskip

\item The case $p \in I_3$.  As before, we start by recalling the assumption that $(-\Delta)^{-\frac{\alpha}{2}}f \in L^p(\R^n)$, which, together by the information $(-\Delta)^{-\frac{\alpha}{2}}f \in L^{\frac{n}{\alpha-1},\infty}(\R^n)$ and the first point of Lemma \ref{Lem-Interpolation}, imply that $(-\Delta)^{-\frac{\alpha}{2}}f \in L^{\frac{n+1}{\alpha-1}}(\R^n)\cap L^{\frac{2n}{\alpha-1}}(\R^n)$. Since $\frac{n+1}{\alpha-1}, \frac{2n}{\alpha-1}\in I_2$ and using again the assumption (\ref{Control-source}),  by the first case it follows that $u \in  L^{\frac{n+1}{\alpha-1}}(\R^n)\cap L^{\frac{2n}{\alpha-1}}(\R^n)$. Therefore, applying the first point of Lemma \ref{Lem-Interpolation}, we find that $u\in L^{\frac{2n}{\alpha-1},2}(\R^n)$. 

\medskip

With this information, we now  prove that $\ds{-(-\Delta)^{-\frac{\alpha}{2}}\text{div}(u\,{\bf A}(u))\in L^\infty(\R^n)}$. Indeed, since $u\in L^{\frac{2n}{\alpha-1},2}(\R^n)$, by the assumption (\ref{Condition-A}) and H\"older inequalities we obtain $u \, {\bf A}(u)\in L^{\frac{n}{\alpha-1},1}(\R^n)$. Then, using the identity (\ref{Identity-Kernel}) and the well-known fact that $K_\alpha \in L^{\frac{n}{(n+1)-\alpha},\infty}(\R^n)$, the third point of Lemma \ref{Lem-Young} yields that $\ds{-(-\Delta)^{-\frac{\alpha}{2}}\text{div}(u\,{\bf A}(u))\in L^\infty(\R^n)}$. 

\medskip

Finally, since it also holds that $\ds{-(-\Delta)^{-\frac{\alpha}{2}} \text{div}(u \, {\bf A}(u)) \in L^{\frac{n}{\alpha-1}, \infty}(\R^n)}$, by  the first point of Lemma \ref{Lem-Interpolation} and using the identity (\ref{Fixed-point}), we obtain $u \in L^p(\R^n)$. 
 \end{itemize}
\end{proof}

Now, Proposition \ref{Th-Existence}  directly follows from Lemmas \ref{Prop-Existence-Lorentz} and \ref{Prop-Existence-Lp}. From  the constants $\eta_1$ and $\eta_2$ given in (\ref{eta1}) and (\ref{Control-R-2}), respectively,  we define the constant $\eta_0:= \min(\eta_1,\eta_2)$ and fix  $R\leq \eta_0$.

\section{Gain of fractional regularity}\label{Sec:Th-Regularity}
\subsection{Proof of Theorem \ref{Th-Regularity}}
For clarity, we divide the proof into the following steps.
\subsection*{Step 1: The parabolic setting} Here we study the evolution problem for equation  (\ref{Equation}):
\begin{equation}\label{Evolution-Equation}
\begin{cases}\vspace{2mm}
    \partial_t v  +(-\Delta)^{\frac{\alpha}{2}} v + \text{div}\big(v\, {\bf A}(v)\big)=g,\\
    v(0,\cdot)=v_0,
  \end{cases}  
\end{equation}
where $v_0$ denotes the initial datum and $g$ denotes a time-independent external source. For a time $0<T<+\infty$, we denote $\mathcal{C}_{*}([0,T],L^p(\R^n))$ the space of bounded and weak-$*$ continuous functions from $[0,T]$ with values in $L^p(\R^n)$. In this framework, we have the following technical result:
\begin{Proposition}\label{Prop-Evolution-Lp} Assume that the operator ${\bf A}(\cdot)$ verifies (\ref{Condition-A}). Let $\alpha>1$  and $\max\left(1, \frac{n}{\alpha-1}\right)<p<+\infty$. Assume that  $v_0,\, g  \in L^p(\R^n)$.

\medskip

Then, there exists a time $T_0>0$, depending on the initial datum $v_0$ and the external source $g$, and there exists a function
\begin{equation*}
    v \in \mathcal{C}_{*}([0,T_0], L^p(\R^n)),
\end{equation*}
which is the unique solution to  equation (\ref{Evolution-Equation}). Additionally, this solution verifies the following estimate:
\begin{equation}\label{Estim-Solution-Besov}
    \sup_{0 < t \leq T_0} t^{\frac{n}{\alpha p}}\| v(t,\cdot)\|_{L^\infty}<+\infty. 
\end{equation}
\end{Proposition}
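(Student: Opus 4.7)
The plan is to apply Banach's contraction principle to the Duhamel form of (\ref{Evolution-Equation}), obtained by transferring the divergence onto the fractional heat kernel $p_\alpha$ in order to avoid any loss of regularity:
\begin{equation*}
\mathcal{B}(v)(t) := p_\alpha(t)\ast v_0 + \int_0^t p_\alpha(t-s)\ast g\,ds - \int_0^t \vec{\nabla} p_\alpha(t-s) \ast \big(v(s)\,{\bf A}(v(s))\big)\,ds,
\end{equation*}
working in the Kato-type space
\begin{equation*}
X_T = \Big\{v\,:\,\|v\|_{X_T}:=\sup_{0<t\leq T}\|v(t)\|_{L^p}+\sup_{0<t\leq T}t^{\frac{n}{\alpha p}}\|v(t)\|_{L^\infty}<+\infty\Big\}.
\end{equation*}
The second component of the norm is designed to give exactly the smoothing estimate (\ref{Estim-Solution-Besov}) for the final solution, while simultaneously providing the scaling-critical auxiliary control needed to close the bilinear estimate.

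The linear contributions are handled directly: Young's inequality and Lemma \ref{Lem-Lebesgue-Besov-Embedding} give $\|p_\alpha(t)\ast v_0\|_{L^p}\leq \|v_0\|_{L^p}$ and $t^{\frac{n}{\alpha p}}\|p_\alpha(t)\ast v_0\|_{L^\infty}\leq C\|v_0\|_{L^p}$, while the source term is controlled by $Ct\|g\|_{L^p}$ in both pieces of the norm, the second piece requiring convergence of $\int_0^t(t-s)^{-n/(\alpha p)}ds$, which holds since $p>n/(\alpha-1)>n/\alpha$. For the bilinear term, Young's inequality with the pairs $(L^1,L^p)$ and $(L^{p'},L^p)$, together with the gradient estimate of Lemma \ref{Lem-Estim-Frac-Heat-Kernel} and the boundedness assumption (\ref{Condition-A}) on ${\bf A}(\cdot)$, lead to Beta-type time integrals of the form
\begin{equation*}
\int_0^t (t-s)^{-1/\alpha}s^{-n/(\alpha p)}\,ds\quad\text{and}\quad \int_0^t (t-s)^{-(1+n/p)/\alpha}s^{-n/(\alpha p)}\,ds.
\end{equation*}
The main obstacle is to verify that, after multiplying by the weight $t^{n/(\alpha p)}$ where appropriate, both integrals yield a strictly positive power $T^\gamma$. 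Both conditions reduce to $p(\alpha-1)>n$, which is precisely the hypothesis $p>n/(\alpha-1)$, while $\alpha>1$ guarantees $1/\alpha<1$. One then gets $\|\mathcal{B}(v)\|_{X_T}\leq C(\|v_0\|_{L^p}+T\|g\|_{L^p})+CT^\gamma\|v\|_{X_T}^2$.

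The same estimates, applied to the decomposition $v_1\,{\bf A}(v_1)-v_2\,{\bf A}(v_2)=(v_1-v_2)\,{\bf A}(v_1)+v_2\,({\bf A}(v_1)-{\bf A}(v_2))$ together with (\ref{Condition-A}), yield the matching Lipschitz bound $\|\mathcal{B}(v_1)-\mathcal{B}(v_2)\|_{X_T}\leq CT^\gamma(\|v_1\|_{X_T}+\|v_2\|_{X_T})\|v_1-v_2\|_{X_T}$. Choosing $T_0>0$ small enough depending on $\|v_0\|_{L^p}$ and $\|g\|_{L^p}$ makes $\mathcal{B}$ a contraction on a small closed ball of $X_{T_0}$, which produces existence and uniqueness of a fixed point $v\in X_{T_0}$. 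Finally, weak-$\ast$ continuity of $v$ from $[0,T_0]$ into $L^p(\R^n)$ is deduced from the Duhamel formula: the semigroup term is strongly $L^p$-continuous by density of Schwartz data, the source term is continuous as an absolutely convergent Bochner integral, and the bilinear piece inherits its continuity from the same estimates applied to time-increments, while the smoothing bound (\ref{Estim-Solution-Besov}) is built into the definition of $X_{T_0}$.
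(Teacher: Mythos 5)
Your construction is essentially the paper's own: the same mild formulation with the divergence transferred onto $\vec{\nabla}p_\alpha$, the same Kato-type space (the paper's $E_T$, with norm $\sup_t\|\cdot\|_{L^p}+\sup_t t^{\frac{n}{\alpha p}}\|\cdot\|_{L^\infty}$), the same kernel estimates (Lemmas \ref{Lem-Estim-Frac-Heat-Kernel} and \ref{Lem-Lebesgue-Besov-Embedding}), the same Beta-type time integrals with the smallness coming from $T^{1-\frac{1}{\alpha}-\frac{n}{\alpha p}}$, which is positive precisely because $p>\frac{n}{\alpha-1}$, and the same bilinear decomposition for the Lipschitz bound. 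Up to the fixed-point step, this matches the paper's argument.

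However, there is a genuine gap at the end: the proposition asserts that $v$ is \emph{the unique solution in} $\mathcal{C}_{*}([0,T_0],L^p(\R^n))$, whereas your contraction argument only yields uniqueness of the fixed point in (a small ball of) the Kato space $X_{T_0}$, i.e.\ among solutions that additionally satisfy $\sup_{0<t\leq T_0}t^{\frac{n}{\alpha p}}\|v(t,\cdot)\|_{L^\infty}<+\infty$. These are not the same statement, and the stronger one is exactly what is needed later: in Step 2 the stationary solution $u\in L^p(\R^n)$ is viewed as a time-independent element of $\mathcal{C}_{*}([0,T_0],L^p)$, but it is \emph{not} known a priori to lie in $X_{T_0}$ (indeed $\|u\|_{L^\infty}<+\infty$ is precisely what one wants to conclude), so uniqueness restricted to $X_{T_0}$ cannot be used to identify $u$ with the constructed solution. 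The paper closes this gap with a separate argument: for two solutions $v_1,v_2\in \mathcal{C}_{*}([0,T_0],L^p)$ with the same data, it estimates $w=v_1-v_2$ directly in $L^p$, using Young with $\|\vec{\nabla}p_\alpha(t-\tau,\cdot)\|_{L^q}$ for $1+\tfrac1p=\tfrac1q+\tfrac2p$, H\"older into $L^{p/2}$, and (\ref{Condition-A}), obtaining $\|w(t,\cdot)\|_{L^p}\leq C\,t^{1-\frac{1}{\alpha}-\frac{n}{\alpha p}}\big(\|v_1\|_{L^\infty_tL^p_x}+\|v_2\|_{L^\infty_tL^p_x}\big)\|w\|_{L^\infty_tL^p_x}$, from which uniqueness in the larger class follows for small time. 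Your proof needs this (or an equivalent) additional step to establish the proposition as stated.
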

\begin{proof} First, note that  equation (\ref{Evolution-Equation}) can be rewritten as the following  (equivalent) mild formulation:
\begin{equation}\label{Mild}
    v(t,\cdot)=p_\alpha(t,\cdot)\ast v_0 +\int_{0}^{t}p_\alpha(t-\tau,\cdot)\ast g\, d\tau - \int_{0}^{t}p_\alpha(t-\tau,\cdot)\ast \text{div}\big(v\, {\bf A}(v)\big)(\tau,\cdot)\, d\tau:=\mathcal{T}_\alpha(v), 
\end{equation}
where the function $p_\alpha(t,x)$ was introduced in (\ref{Fractional-Heat-Kernel}). Note that the right-hand side of the expression above defines the nonlinear operator $\mathcal{T}_\alpha(\cdot)$. For a time $0<T\leq 1$, which we will fix small enough below, we will prove that $\mathcal{T}_\alpha(\cdot)$ satisfies the hypotheses of the Banach's contraction principle in a closed ball of following  the Banach space:
\begin{equation*}
    E_T:=\left\{ w \in \mathcal{C}_{*}([0,T], L^p(\R^n)) : \, \sup_{0< t \leq T} \, t^{\frac{n}{\alpha p}}\| w(t,\cdot)\|_{L^\infty}<+\infty \right\},
\end{equation*}
endowed with the norm
\begin{equation*}
    \| w \|_{E_T}:=\sup_{0\leq t \leq T}\| w(t,\cdot)\|_{L^p}+\sup_{0< t \leq T} \, t^{\frac{n}{\alpha p}}\| w(t,\cdot)\|_{L^\infty}.
\end{equation*}

We will divide each step of the proof into the following technical lemmas. In   expression (\ref{Mild}), we begin by defining: 
\begin{equation}\label{Mild-Data}
U_0:= p_\alpha(t,\cdot)\ast v_0 +\int_{0}^{t}p_\alpha(t-\tau,\cdot)\ast g\, d\tau.   
\end{equation}
\begin{Lemme} Let $v_0, \, g \in L^p(\R^n)$. Then, it follows that $U_0 \in E_T$ and the following estimate holds:
\begin{equation}\label{Estim-Data-Terms}
    \| U_0 \|_{E_T} \leq C\, \big( \| v_0 \|_{L^p}+ \|g\|_{L^p} \big).
\end{equation}
\end{Lemme}
\begin{proof} For the initial datum term given in expression (\ref{Mild-Data}), since $v_0 \in L^p(\R^n)$, it follows  from the first point of Lemma \ref{Lem-Estim-Frac-Heat-Kernel} and Young inequalities that $\ds{\sup_{0\leq t \leq T}\|p_\alpha(t,\cdot)\ast v_0\|_{L^p} \leq C\,|v_0\|_{L^p}}$,  hence $p_\alpha(t,\cdot)\ast v_0 \in \mathcal{C}_{*}([0,T],L^p(\R^n))$. Additionally, using  Lemma \ref{Lem-Lebesgue-Besov-Embedding}, we find that  
\begin{equation}\label{Estim-Initial-Datum}
\sup_{0<t\leq T}\, t^{\frac{n}{\alpha p}}\| p_\alpha(t,\cdot)\ast v_0\|_{L^\infty}\leq C\, \|v_0\|_{L^p}.    
\end{equation}

We now study the external source term in (\ref{Mild-Data}).  Since the external source $g$ is a time-independent function, and since $0<T\leq 1$,  we directly obtain
\begin{equation*}
  \sup_{0\leq t \leq T}  \left\| \int_{0}^{t}p_\alpha(t-\tau,\cdot)\ast g \, d\tau \right\|_{L^p} \leq C\, T\, \| g \|_{L^p}\leq C\, \| g\|_{L^p}.
\end{equation*}
Additionally, from  estimate (\ref{Estim-Lebesgue-Besov}), and since $p>\frac{n}{\alpha-1}>\frac{n}{\alpha}$ (hence we obtain that $1-\frac{n}{\alpha p}>0$), we can write
\begin{equation*}
\begin{split}
   &\,  \sup_{0\leq t \leq T}\, t^{\frac{n}{\alpha p}}\left\| \int_{0}^{t} p_\alpha(t-\tau,\cdot)\ast g\, d\tau\right\|_{L^\infty} \leq  \sup_{0\leq t \leq T}\, t^{\frac{n}{\alpha p}} \int_{0}^{t} \| p_\alpha(t-\tau,\cdot)\ast g\|_{L^\infty}\, d\tau\\
   =&\, \sup_{0\leq t \leq T}\, t^{\frac{n}{\alpha p}} \int_{0}^{t} \frac{(t-\tau)^{\frac{n}{\alpha p}}}{(t-\tau)^{\frac{n}{\alpha p}}} \| p_\alpha(t-\tau,\cdot)\ast g\|_{L^\infty}\, d\tau\leq \sup_{0\leq t \leq T}\, t^{\frac{n}{\alpha p}}\, C\, \| g\|_{L^p}\left( \int_{0}^{t} \frac{d \tau}{(t-\tau)^{\frac{n}{\alpha p}}} \right)\\
   \leq &\, C\,T\, \| g\|_{L^p} \leq C\, \| g \|_{L^p}.
    \end{split}
\end{equation*}
Gathering these estimates, we find that $\ds{\int_{0}^{t}p_\alpha(t-\tau,\cdot)\ast g\, d\tau \in E_T}$, where the following estimate holds:
\begin{equation}\label{Estim-External-Source}
    \left\| \int_{0}^{t}p_\alpha(t-\tau,\cdot)\ast g\, d\tau \right\|_{E_T} \leq C\,  \| g \|_{L^p}.
\end{equation}

Finally, the wished estimate (\ref{Estim-Data-Terms}) follows from estimates (\ref{Estim-Initial-Datum}) and (\ref{Estim-External-Source}). 
\end{proof}

Now, for a fixed radius $R>0$, we define the ball  $\ds{B_R := \big\{  w \in E_T : \, \| w-U_0 \|_{E_T} \leq R  \big\}}$.
\begin{Lemme}\label{Lem-Parabolic-1} Assume that the operator ${\bf A}(\cdot)$ verifies (\ref{Condition-A}). Fix the time $0<T\leq 1$ sufficiently small as in expression (\ref{Control-Time-1}) below. Then, the operator $\mathcal{T}_{\alpha}(\cdot)$, defined in (\ref{Mild}), maps the ball $B_R$ into itself. 
\end{Lemme}
\begin{proof} Let $w \in B_R$. Since $\ds{\mathcal{T}_\alpha(w)-U_0=\int_{0}^{t}p_\alpha(t-\tau,\cdot)\ast \text{div}\big( w\,{\bf A}(w)\big)(\tau,\cdot)\, d\tau}$, we will estimate this expression term by term in the norm  $\| \cdot \|_{E_T}$. 

\medskip
    
 Using the second estimate in Lemma \ref{Lem-Estim-Frac-Heat-Kernel} (with  $q=1$), and using the assumption (\ref{Condition-A}), we write
\begin{equation*}
    \begin{split}
     &\,    \sup_{0\leq t \leq T} \left\| \int_{0}^{t} p_\alpha(t-\tau,\cdot)\ast \text{div}\big( w\, {\bf A}(w)\big)(\tau,\cdot)\, d\tau  \right\|_{L^p}\\
     \leq &\, \sup_{0\leq t \leq T} \, \int_{0}^{t} \| \vec{\nabla}p_\alpha (t-\tau,\cdot)\|_{L^1}\, \| w \,{\bf A}(w)(\tau,\cdot)\|_{L^p}\, d\tau\\
     \leq &\, \sup_{0\leq t \leq T} \, C\, \int_{0}^{t} \frac{1}{(t-\tau)^{\frac{1}{\alpha}}}\, \| w(\tau,\cdot)\|_{L^\infty}\, \| {\bf A}(w)(\tau,\cdot)\|_{L^p}\, d \tau\\
       \leq &\, \sup_{0\leq t \leq T} \, C\, \int_{0}^{t} \frac{1}{(t-\tau)^{\frac{1}{\alpha}}}\, \frac{\tau^{\frac{n}{\alpha p}}}{\tau^{\frac{n}{\alpha p}}} \| w(\tau,\cdot)\|_{L^\infty}\, \| w(\tau,\cdot)\|_{L^p}\, d \tau\\
       \leq &\, \sup_{0\leq t \leq T} \, C\,\left( \int_{0}^{t} \frac{d\tau}{(t-\tau)^{\frac{1}{\alpha}}\, \tau^{\frac{n}{\alpha p}}}\right) \left( \sup_{0\leq \tau\leq T}  \tau^{\frac{n}{\alpha p}}\| w(\tau,\cdot)\|_{L^\infty}\right)\, \left( \sup_{0\leq \tau\leq T}\| w(\tau,\cdot)\|_{L^p}\right)\\
       \leq &\, C\, T^{-\frac{1}{\alpha}-\frac{n}{\alpha p}+1}\, \| w\|^2_{E_T}.
    \end{split}
\end{equation*}
In the last line, note that the integral converges due to the conditions $\alpha>1$ and $p>\frac{n}{\alpha-1}>\frac{n}{\alpha}$. Additionally, from the inequality $p>\frac{n}{\alpha-1}$, it follows that $-\frac{1}{\alpha}-\frac{n}{\alpha p}+1>0$. 

\medskip

Now, for the parameter $q$ such that $1=1/q+1/p$, applying Young inequalities with the relation $1+1/\infty=1/q+1/p$, and using again the second point of Lemma \ref{Lem-Estim-Frac-Heat-Kernel}, we obtain
\begin{equation*}
    \begin{split}
       &\,  \sup_{0\leq t \leq T}\, t^{\frac{n}{\alpha p}}\, \left\| \int_{0}^{t} p_\alpha(t-\tau,\cdot)\ast \text{div}(w\, {\bf A}(w))(\tau,\cdot)\, d \tau \right\|_{L^\infty}\\
       \leq &\, \sup_{0\leq t \leq T}\, t^{\frac{n}{\alpha p}} \, \int_{0}^{t} \| \vec{\nabla}p_\alpha(t-\tau,\cdot)\|_{L^q}\, \| w\, {\bf A}(w) (\tau,\cdot)\|_{L^p}\, d\tau\\
       \leq &\, \sup_{0\leq t \leq T}\, t^{\frac{n}{\alpha p}}\, C\, \int_{0}^{t} \frac{1}{(t-\tau)^{\frac{1+n/p}{\alpha}}}\, \|w(\tau,\cdot)\|_{L^\infty}\, \| {\bf A}(w)(\tau,\cdot)\|_{L^p}\, d\tau\\
       \leq &\, \sup_{0\leq t \leq T}\, t^{\frac{n}{\alpha p}}\, C\, \left(  \int_{0}^{t} \frac{d\tau}{(t-\tau)^{\frac{1+n/p}{\alpha}} \, \tau^{\frac{n}{\alpha p}}} \right) \| w\|^2_{E_T}\\
       \leq &\, C\, T^{-\frac{1}{\alpha}-\frac{n}{\alpha p}+1}\, \| w\|^{2}_{E_T}.
    \end{split}
\end{equation*}
As before, note that this integral converges due to the well-known fact that  $p>\frac{n}{\alpha-1} >\frac{n}{\alpha}$. 

\medskip

Gathering  these estimates, we obtain
\begin{equation}\label{Estim-Nonlinear}
  \left\| \int_{0}^{t} p_\alpha(t-\tau,\cdot)\ast \text{div}(w\, {\bf A}(w))(\tau,\cdot)\, d \tau \right\|_{E_T}\leq  C\, T^{-\frac{1}{\alpha}-\frac{n}{\alpha p}+1}\, \| w\|^{2}_{E_T}.
\end{equation}
Therefore, since $w\in B_R$, and using the estimate (\ref{Estim-Data-Terms}), we can write
\begin{equation*}
\begin{split}
   C\, T^{-\frac{1}{\alpha}-\frac{n}{\alpha p}+1}\, \| w\|^{2}_{E_T}
  \leq &\, C\, T^{-\frac{1}{\alpha}-\frac{n}{\alpha p}+1}\, \| w\|^{2}_{E_T} \leq \, C\, T^{-\frac{1}{\alpha}-\frac{n}{\alpha p}+1}\, \big(\| w-U_0\|_{E_T}+ \|U_0\|_{E_T}\big)^{2}\\
  \leq &\, C\, T^{-\frac{1}{\alpha}-\frac{n}{\alpha p}+1}\, \big( R+ C(\| v_0\|_{L^p}+\|g\|_{L^p}) \big)^2. 
\end{split}
\end{equation*}
Hence, we fix the time $T$ sufficiently small such that 
\begin{equation}\label{Control-Time-1}
  T^{-\frac{1}{\alpha}-\frac{n}{\alpha p}+1}\, \big( R+ C(\| v_0\|_{L^p}+\|g\|_{L^p}) \big)^2 \leq R.  
\end{equation}
\end{proof}

\begin{Lemme}\label{Lem-Parabolic-2} Assume that the operator ${\bf A}(\cdot)$ verifies (\ref{Condition-A}). Fix the time $0<T\leq 1$ sufficiently small as in expression (\ref{Condition-Time-2}) below. Then, the operator $T_\alpha(\cdot): B_R \to B_R$ is contractive. 
\end{Lemme}
\begin{proof}  Let $w_1,w_2 \in B_R$. From the expression (\ref{Mild}) it follows that 
\begin{equation}\label{Equation-Difference}
    \mathcal{T}_\alpha (w_1)-\mathcal{T}_\alpha(w_2)=\int_{0}^{t}p_\alpha\ast \text{div}\big( w_1({\bf A}(w_1)-{\bf A}(w_2))+ (w_1-w_2)\, {\bf A}(w_2) \big)(\tau,\cdot)d\tau.
\end{equation}
Then, following the same estimates used to prove (\ref{Estim-Nonlinear}) and applying  the estimate (\ref{Estim-Data-Terms}), we find that
\begin{equation*}
\begin{split}
    &\, \|  \mathcal{T}_\alpha (w_1)-\mathcal{T}_\alpha(w_2)\|_{E_T} \leq C\, T^{-\frac{1}{\alpha}-\frac{n}{\alpha p}+1}\, \big(\| w_1\|_{E_T}+\|w_2\|_{E_T} \big)\| w_1 - w_2\|_{E_T}\\
    \leq &\, C\, T^{-\frac{1}{\alpha}-\frac{n}{\alpha p}+1}\, \big(\| w_1-U_0\|_{E_T}+\|w_2-U_0\|_{E_T}+2\|U_0\|_{E_T} \big)\| w_1 - w_2\|_{E_T}\\
    \leq &\,  C\, T^{-\frac{1}{\alpha}-\frac{n}{\alpha p}+1}\, \Big(2R+2C(\|v_0\|_{L^p}+\|g\|_{L^p})\Big)\| w_1 - w_2\|_{E_T}.
 \end{split}
\end{equation*}
Consequently, we fix the time $T$ such that 
\begin{equation}\label{Condition-Time-2}
    C\, T^{-\frac{1}{\alpha}-\frac{n}{\alpha p}+1}\, \Big(2R+2C(\|v_0\|_{L^p}+\|g\|_{L^p})\Big)<1.
\end{equation}
\end{proof}

Having proven Lemmas \ref{Lem-Parabolic-1} and \ref{Lem-Parabolic-2}, we fix the time $0<T=T_0\leq 1$ sufficiently small so that both conditions (\ref{Control-Time-1}) and (\ref{Condition-Time-2}) hold. Thus, we obtain a unique solution $v \in B_R \subset E_{T_0}\subset \mathbf{C}_{*}([0,T_0],L^p(\R^n))$ to equation (\ref{Mild}).

\medskip

Now, we prove the uniqueness of this solution in the largest space $\mathbf{C}_{*}\big([0,T_0],L^p(\R^n)\big)$. Assume that $v_1, v_2 \in \mathbf{C}_{*}\big([0,T_0],L^p(\R^n)\big)$ are two solutions to equation (\ref{Evolution-Equation}) with the same initial datum $v_0$ and external source $f$. Defining $w:=v_1-v_2$, we see  from expression (\ref{Equation-Difference}) that  this function  satisfies  the fixed-point problem:
\begin{equation*}
    w(t,\cdot)=\int_{0}^{t}p_\alpha(t-\tau,\cdot)\ast \text{div}\big( v_1 ({\bf A}(v_1)-{\bf A}(v_2)) - w\, {\bf A}(v_2)\big)(\tau,\cdot)d\tau. 
\end{equation*}
Then, for the parameter $q$ such that $1=1/q+1/p$ and using the relation $1+1/p=1/q+2/p$, we apply  Young and H\"older inequalities. Additionally, from the   the second point of Lemma \ref{Lem-Estim-Frac-Heat-Kernel} and the assumption (\ref{Condition-A}),  we obtain
\begin{equation*}
    \begin{split}
        \| w(t,\cdot)\|_{L^p}\leq &\, \int_{0}^{t} \|\vec{\nabla} p_\alpha(t-\tau,\cdot)\|_{L^q}\, \left\|  \big( v_1 ({\bf A}(v_1)-{\bf A}(v_2)) - w\, {\bf A}(v_2)\big)(\tau,\cdot)\right\|_{L^{\frac{p}{2}}}\, d\tau\\
        \leq &\,C\, \int_{0}^{t} \frac{1}{(t-\tau)^{\frac{1+n/p}{\alpha}}}\, \big( \| v_1(t,\cdot)\|_{L^p}\, \|({\bf A}(v_1)-{\bf A}(v_2))(\tau,\cdot)\|_{L^p}+ \|w(\tau,\cdot)\|_{L^p}\, \|{\bf A}(v_2)(\tau,\cdot)\|_{L^p}\big)d\tau\\
        \leq &\, C\, \int_{0}^{t} \frac{1}{(t-\tau)^{\frac{1+n/p}{\alpha}}}\, \big( \| v_1(\tau,\cdot)\|_{L^p}+\|v_2(\tau,\cdot)\|_{L^p}\big)\| w(\tau,\cdot)\|_{L^p}\,d\tau\\
        \leq &\, C\, t^{-\frac{1}{\alpha}-\frac{n}{\alpha p}+1}\, \big( \|v_1\|_{L^\infty_t L^p_x}+\|v_2\|_{L^\infty_t L^p_x}\big)\| w\|_{L^\infty_t L^p_x}.
    \end{split}
\end{equation*}
Consequently, by well-known arguments, the uniqueness of the solution follows from this estimate. Proposition \ref{Prop-Evolution-Lp} is now proven.  \end{proof}

\subsection*{Step 2: Global boundedness of the solution $u$} Proposition \ref{Prop-Evolution-Lp} is the key tool for proving the following:
\begin{Proposition}\label{Prop-Global-Boundedness} Assume that the operator ${\bf A}(\cdot)$ verifies (\ref{Condition-A}). Let $\alpha>1$,   $\max\left(1, \frac{n}{\alpha-1} \right)<p<+\infty$ and let $u \in L^p(\R^n)$ be a  solution of equation (\ref{Equation}).  

\medskip

For any $1<r<+\infty$ and $s\geq 0$, assume that the external source  verifies $f\in \dot{W}^{-1,r}\cap \dot{W}^{s,r}(\R^n)$. Then, it holds $u\in L^\infty(\R^n)$. 
\end{Proposition}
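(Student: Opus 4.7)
My plan is to regard $u$ as a time-independent solution of the parabolic problem \eqref{Evolution-Equation} and then to harvest the smoothing estimate \eqref{Estim-Solution-Besov} provided by Proposition \ref{Prop-Evolution-Lp}. Concretely, setting $\widetilde{v}(t,x):=u(x)$, $v_0:=u$, $g:=f$, a short calculation at the level of the mild formulation \eqref{Mild} shows that $\widetilde{v}$ solves the Cauchy problem \eqref{Evolution-Equation}: one uses the semigroup identity
\[ \int_0^t p_\alpha(t-\tau,\cdot)\ast (-\Delta)^{\frac{\alpha}{2}} u \, d\tau \;=\; u - p_\alpha(t,\cdot)\ast u, \]
which follows from $\partial_\tau p_\alpha(t-\tau,\cdot)=(-\Delta)^{\frac{\alpha}{2}} p_\alpha(t-\tau,\cdot)$, together with the identity $(-\Delta)^{\frac{\alpha}{2}} u = f - \text{div}(u\,{\bf A}(u))$ coming from \eqref{Equation} and the constraint $\text{div}({\bf A}(u))=0$.

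To invoke Proposition \ref{Prop-Evolution-Lp} with data $(v_0,g)=(u,f)$, I need $v_0, g \in L^p(\R^n)$. The initial datum $u\in L^p(\R^n)$ is given by hypothesis. For the source, the assumption $f\in \dot{W}^{-1,r}\cap\dot{W}^{s,r}(\R^n)$ with $s\geq 0$ gives, by real interpolation on the homogeneous Sobolev scale, $f\in L^r(\R^n)$; a Sobolev embedding/further interpolation argument then places $f$ in $L^p(\R^n)$. Should the direct embedding fail for the given pair $(r,p)$, I would instead write $f=\text{div}(F)$ with $F\in L^r$ (using $f\in \dot{W}^{-1,r}$) and rerun the Duhamel estimates of Proposition \ref{Prop-Evolution-Lp} on the term $\int_0^t p_\alpha(t-\tau,\cdot)\ast f\, d\tau$ via the integrability of $\vec{\nabla}p_\alpha$ provided by Lemma \ref{Lem-Estim-Frac-Heat-Kernel}, exactly as was done for the nonlinear term $\text{div}(v\,{\bf A}(v))$.

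Once Proposition \ref{Prop-Evolution-Lp} applies, it produces a time $T_0>0$ and a \emph{unique} solution $v\in \mathcal{C}_*([0,T_0], L^p(\R^n))$ satisfying the smoothing bound $\sup_{0<t\leq T_0} t^{\frac{n}{\alpha p}}\|v(t,\cdot)\|_{L^\infty}<+\infty$. Since $\widetilde{v}\equiv u$ trivially lies in $\mathcal{C}_*([0,T_0], L^p(\R^n))$ and (by the paragraph above) solves the same mild problem, uniqueness forces $v=\widetilde{v}$. Evaluating the smoothing control at $t=T_0$ then yields $\|u\|_{L^\infty}\leq C\, T_0^{-\frac{n}{\alpha p}}<+\infty$, which is the desired conclusion $u\in L^\infty(\R^n)$.

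The main technical obstacle I expect is the first bridge: converting the Sobolev-type regularity of $f$ into the $L^p$-integrability demanded by Proposition \ref{Prop-Evolution-Lp}, or equivalently justifying a source-term Duhamel estimate in $E_{T_0}$ directly from $f\in \dot{W}^{-1,r}\cap \dot{W}^{s,r}$. The restriction $p>\max(1,n/(\alpha-1))$ is what makes the relevant time integrals converge in the parabolic argument, and I expect it to dictate which interpolation or embedding of $f$ is ultimately used. Once this bridge is in place, the stationary-as-parabolic identification together with the uniqueness and smoothing in Proposition \ref{Prop-Evolution-Lp} delivers the $L^\infty$ bound essentially for free.
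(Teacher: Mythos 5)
Your argument is correct and is essentially the paper's own proof: regard $u$ as a time-independent solution of the parabolic problem (\ref{Evolution-Equation}) with data $(v_0,g)=(u,f)$, use the uniqueness in $\mathcal{C}_{*}([0,T_0],L^p(\R^n))$ from Proposition \ref{Prop-Evolution-Lp} to identify $u$ with the constructed solution, and read off $u\in L^\infty(\R^n)$ from the smoothing estimate (\ref{Estim-Solution-Besov}). The only simplification you missed is that the hypothesis on $f$ is assumed for every $1<r<+\infty$, so one may simply take $r=p$ and interpolate between $\dot{W}^{-1,p}$ and $\dot{W}^{s,p}$ to get $f\in L^p(\R^n)$ directly, which is what the paper does and renders your fallback divergence-form Duhamel argument unnecessary.
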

\begin{proof} From the assumption above on $f$, using interpolation inequalities and setting $r=p$,  it follows  that $f\in L^p(\R^n)$. Then, in the setting of the evolution problem (\ref{Evolution-Equation}), we fix the initial datum $v_0=u$, the external source $g=f$, and  denote by $v\in \mathcal{C}_{*}\big([0,T_0], L^p(\R^n)\big)$ the unique associated solution obtained from Proposition \ref{Prop-Evolution-Lp}. Additionally, recall that this solution verifies the estimate (\ref{Estim-Solution-Besov}).

\medskip

On the other hand, since the solution $u \in L^p(\R^n)$ of equation (\ref{Equation}) is a time-independent function (in particular, $\partial_t u =0$), it is also a solution of the evolution problem (\ref{Evolution-Equation}) with the same initial datum $v_0=u$ and the external source $f$. Moreover, it directly  follows that  $u \in \mathcal{C}_{*}([0,T_0],L^p(\R^n))$.

\medskip

Finally, from the uniqueness of solutions in the space $\mathcal{C}_{*}([0,T_0],L^p(\R^n))$, the  identity $v=u$ holds . Consequently, the solution $u$ also verifies the estimate (\ref{Estim-Solution-Besov}), which implies that $u\in L^\infty(\R^n)$. 
\end{proof}

\subsection*{Step 3: Gain of fractional regularity for the solution $u$} From  Proposition \ref{Prop-Global-Boundedness} it holds that $u \in L^\infty(\R^n)$. Additionally, from the assumption in Theorem \ref{Th-Regularity} it also holds that $u\in L^{1,\infty}(\R^n)$. Consequently, by the first point of Lemma \ref{Lem-Interpolation} we obtain that $u \in L^r(\R^n)$ for any $1<r<+\infty$. This fact is a  key tool for proving the following:
\begin{Proposition}\label{Prop-Gain-Regularity} Under the same hypotheses as in Proposition \ref{Prop-Global-Boundedness}, assume additionally  that the operator ${\bf A}(\cdot)$ satisfies  (\ref{Condition-A-Sobolev}) and the solution also satisfies $u\in L^{1,\infty}(\R^n)$.  Then, for any $1<r<+\infty$, we have $u \in \dot{W}^{s+\alpha,p}(\R^n)$.
\end{Proposition}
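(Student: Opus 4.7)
By Proposition \ref{Prop-Global-Boundedness} one has $u \in L^\infty(\R^n)$, and combining this with the hypothesis $u \in L^{1,\infty}(\R^n)$ the second point of Lemma \ref{Lem-Interpolation} gives $u \in L^\rho(\R^n)$ for every $1 < \rho < +\infty$; by (\ref{Condition-A}) the same holds for ${\bf A}(u)$. This baseline integrability is what allows all H\"older-type estimates on the product $u\,{\bf A}(u)$ to close without ever having to invoke the (in general false) control ${\bf A}(u) \in L^\infty(\R^n)$, which is the technical role of the assumption $u \in L^{1,\infty}(\R^n)$. My plan is to apply $(-\Delta)^{(s+\alpha)/2}$ to the fixed-point identity (\ref{Fixed-Point-Intro}) and reduce the question of the $\dot{W}^{s+\alpha,r}$-regularity of $u$ to a Sobolev control of $u\,{\bf A}(u)$, after which a bootstrap in the Sobolev order closes the argument.

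Since $\mathrm{div} = \sum_j R_j (-\Delta)^{1/2}$ up to Riesz transforms (bounded on $L^r$ for $1<r<+\infty$), taking the $L^r$-norm yields
\[
\|u\|_{\dot{W}^{s+\alpha,r}} \leq C\,\|(-\Delta)^{s/2}f\|_{L^r} + C\,\|u\,{\bf A}(u)\|_{\dot{W}^{s+1,r}}.
\]
The first term is finite by the assumption $f \in \dot{W}^{s,r}(\R^n)$. For the second, Lemma \ref{Leibniz-rule} combined with (\ref{Condition-A}) applied to $\|{\bf A}(u)\|_{L^{p_2}}$ and (\ref{Condition-A-Sobolev}) applied to $\|(-\Delta)^{(s+1)/2}{\bf A}(u)\|_{L^{q_2}}$ gives
\[
\|u\,{\bf A}(u)\|_{\dot{W}^{s+1,r}} \leq C\,\bigl(\|u\|_{\dot{W}^{s+1,p_1}}\|u\|_{L^{p_2}} + \|u\|_{L^{q_1}}\|u\|_{\dot{W}^{s+1,q_2}}\bigr),
\]
with $1/r = 1/p_1 + 1/p_2 = 1/q_1 + 1/q_2$. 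Since the Lebesgue norms of $u$ on the right are all finite, only a $\dot{W}^{s+1,\rho}$-control on $u$ for some $\rho>r$ remains to be proved; because $\alpha>1$, this is a target of strictly lower Sobolev order.

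The argument can now be iterated. Set $N = \lceil (s+\alpha)/(\alpha-1) \rceil$, $\delta = (s+\alpha)/N \in (0,\alpha-1]$ and $\tau_k = (s+\alpha) - k\delta$, so that $\tau_0=s+\alpha$ and $\tau_N=0$. Together with an increasing chain $r = r_0 < r_1 < \cdots < r_N < +\infty$ prescribed by the choice of $(p_1,p_2,q_1,q_2)$ in the Leibniz step, the same reduction at level $\tau_k$ replaces the statement $u \in \dot{W}^{\tau_k, r_k}$ by the combination of (i) a control of $\|f\|_{\dot{W}^{\tau_k-\alpha, r_k}}$ (obtained by complex interpolation between the endpoints $\dot{W}^{-1,r}$ and $\dot{W}^{s,r}$ of the hypothesis, together with a Sobolev embedding to move from $r$ to $r_k$ if needed) and (ii) the reduced statement $u \in \dot{W}^{\tau_{k+1}, r_{k+1}}$. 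After $N$ steps we reach $\tau_N = 0$, where the control amounts to the baseline $u \in L^{r_N}(\R^n)$, and the induction closes.

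The main technical obstacle I anticipate is the simultaneous bookkeeping of the Sobolev order $\tau_k$ and the Lebesgue exponent $r_k$: because the hypotheses on $f$ are prescribed only at the single exponent $r$, one has to verify at every step that interpolation and Sobolev embedding supply the required regularity of $f$ at the current pair $(\tau_k-\alpha, r_k)$, and that the chain $r_k$ remains bounded so that the baseline $u \in L^{r_N}(\R^n)$ is still at our disposal at the terminal step. The condition $\alpha > 1$ guarantees $\delta > 0$ and therefore a \emph{finite} number of iterations, which is what makes this bookkeeping tractable.
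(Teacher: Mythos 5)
Your overall strategy is the paper's: establish the baseline $u\in L^\infty(\R^n)$ and (by interpolation of $L^{1,\infty}\cap L^\infty$, which is the \emph{first} point of Lemma \ref{Lem-Interpolation}, not the second) $u,\,{\bf A}(u)\in L^\rho(\R^n)$ for all $1<\rho<+\infty$, then bootstrap on the fixed-point identity (\ref{Fixed-point}) using the Kato--Ponce estimate of Lemma \ref{Leibniz-rule}, gaining $\alpha-1$ derivatives per step. However, your specific bookkeeping has a genuine gap in the treatment of the source term. With the uniform step $\delta=(s+\alpha)/N$, the reduction at level $\tau_k$ requires $f\in \dot{W}^{\tau_k-\alpha,\,r_k}$ with $\tau_k-\alpha=s-k\delta$; at the last level $k=N-1$ this order equals $\delta-\alpha$, which is strictly below $-1$ whenever $(s+\alpha)/(\alpha-1)\notin\mathbb{N}$. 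The hypothesis $f\in\dot{W}^{-1,r}\cap\dot{W}^{s,r}(\R^n)$ gives control only at orders in $[-1,s]$: orders below $-1$ are a genuinely stronger (low-frequency) requirement that no interpolation between the two endpoints can produce, so the final step of your recursion asks for information on $f$ that is not available. The paper avoids exactly this by writing $s+\alpha=k(\alpha-1)+\varepsilon$ with $0\le\varepsilon<\alpha-1$, taking \emph{full} steps of size $\alpha-1$ anchored at the identity $(-\Delta)^{\frac{\alpha-1}{2}}u=-(-\Delta)^{-\frac12}\mathrm{div}(u\,{\bf A}(u))+(-\Delta)^{-\frac12}f$, whose source order is exactly $-1$, so that all intermediate source orders $m(\alpha-1)-\alpha$ lie in $[-1,s]$, and handling the fractional leftover $\varepsilon$ only at the top, where the source order is exactly $s$.

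A second, more minor point: your increasing chain $r=r_0<r_1<\cdots<r_N$ together with ``Sobolev embedding to move $f$ from $r$ to $r_k$'' is problematic, since embedding into a larger Lebesgue exponent costs $n(1/r-1/r_k)$ derivatives of $f$, which the hypothesis need not supply. This complication is avoidable: since the baseline gives $u,\,{\bf A}(u)\in L^\rho(\R^n)$ for every $\rho$, one can (as the paper does) prove each regularity level for \emph{all} exponents $1<r<+\infty$ simultaneously, distributing the Leibniz factors by H\"older with $1/r=1/r_1+1/r_2$ at each step and never touching the integrability of $f$ beyond the prescribed exponent. With the step-size decomposition corrected as above and the exponent chain removed, your argument coincides with the paper's proof.
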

\begin{proof} We begin  by explaining the  strategy of the proof. For given $\alpha>1$ and $s\geq 0$,  we consider the quantities $s+\alpha>1$ and $\alpha-1>0$. Consequently, we can find a parameter  $k \in \mathbb{N}$  such that $ k(\alpha-1) \leq s+\alpha \leq (k+1)(\alpha-1)$. Then, for $0 \leq \varepsilon < \alpha-1$,  we can write 
\begin{equation}\label{Decomposition-s-alpha}
s+\alpha= k(\alpha-1)+\varepsilon.    
\end{equation}
In this context,  in order to prove the gain of regularity $u \in \dot{W}^{s+\alpha,r}(\R^n)$, we first show that $u \in \dot{W}^{k(\alpha-1),r}(\R^n)$,  and then verify that $(-\Delta)^{\frac{k(\alpha-1)}{2}}u \in \dot{W}^{\varepsilon,r}(\R^n)$.    

\medskip

To show that $u \in \dot{W}^{k(\alpha-1),r}(\R^n)$, we will apply an iteration process with respect to the parameter $k$.
 \begin{itemize}
    \item The case $k=0$. Since $u\in L^{1,\infty}\cap L^\infty(\R^n)$, from the first point of Lemma \ref{Lem-Interpolation} it follows that  $u \in L^r(\R^n)$, for any $1<r<+\infty$. 

    \medskip

    \item The case $k=1$.  Since  $u$ solves  fixed-point equation (\ref{Fixed-point}), we can write 
\begin{equation}\label{Identity-Reg-1}
(-\Delta)^{\frac{\alpha-1}{2}} u = -(-\Delta)^{-\frac{1}{2}} \text{div}\big( u\, {\bf A}(u)\big)+(-\Delta)^{-\frac{1}{2}} f,     
\end{equation}
where  we will prove that each term on right-hand side belong to the space $L^r(\R^n)$.

\medskip

For the first term,  since $u \in L^r(\R^n)$ and  from the assumption (\ref{Condition-A-Sobolev}) on the operator ${\bf A}(\cdot)$, we first obtain that ${\bf A}(u)\in L^r(\R^n)$. Then, from the additional information $u\in L^\infty(\R^n)$, it follows that  $(-\Delta)^{-\frac{1}{2}} \text{div}\big( u\, {\bf A}(u)\big) \in L^r(\R^n)$. For the second term, from the  assumption on $f$ given in the first point of Theorem \ref{Th-Regularity},  we directly have  $(-\Delta)^{-\frac{1}{2}} f \in L^r(\R^n)$. 

 \medskip
 
 Consequently, for any $1<r<+\infty$, we conclude that  $u \in \dot{W}^{\alpha-1,r}(\R^n)$.

 \medskip

 \item The case $k\geq 2$.  Applying the operator $(-\Delta)^{\frac{\alpha-1}{2}}$ to the identity (\ref{Identity-Reg-1}), we obtain 
\begin{equation*}
(-\Delta)^{\frac{2(\alpha-1)}{2}} u = -(-\Delta)^{\frac{\alpha-1}{2}} (-\Delta)^{-\frac{1}{2}} \text{div}\big( u\, {\bf A}(u)\big) + (-\Delta)^{\frac{\alpha-1}{2}} (-\Delta)^{-\frac{1}{2}} f.     
\end{equation*}
As before, we must  prove that each term on the right-hand side  belong to the space $L^r(\R^n)$. 

\medskip 

For the first term, we write 
\[ (-\Delta)^{\frac{\alpha-1}{2}} (-\Delta)^{-\frac{1}{2}} \text{div}\big( u\, {\bf A}(u)\big)= (-\Delta)^{-\frac{1}{2}} \text{div}\big((-\Delta)^{\frac{\alpha-1}{2}} ( u\, {\bf A}(u))\big). \]
Here, for $1<r_1,r_2<+\infty$ such that $\frac{1}{r}=\frac{1}{r_1}+\frac{1}{r_2}$, from  Lemma  \ref{Leibniz-rule}  it follows that
\begin{equation}\label{Estim-Leibniz-Rule}
    \left\|(-\Delta)^{\frac{\alpha-1}{2}} ( u\, {\bf A}(u)) \right\|_{L^r}\leq C\, \| (-\Delta)^{\frac{\alpha-1}{2}} u\|_{L^{r_1}}\, \| {\bf A}(u)\|_{L^{r_2}}+ C\,\| u\|_{L^\infty}\, \| (-\Delta)^{\frac{\alpha-1}{2}}{\bf A}(u)\|_{L^r}. 
\end{equation}
Recall that  from the previous case (when $k=1$) we have $u \in \dot{W}^{\alpha-1,r}(\R^n)$ for any $1<r<+\infty$. This fact, along with the assumption (\ref{Condition-A-Sobolev}) on the operator ${\bf A}(\cdot)$ implies that $\| (-\Delta)^{\frac{\alpha-1}{2}} u\|_{L^{r_1}}<+\infty$ and $\| (-\Delta)^{\frac{\alpha-1}{2}}{\bf A}(u)\|_{L^r}<+\infty$, respectively. Additionally, since  for any $1<r<+\infty$  we have $u \in L^r(\R^n)$,  from the assumption (\ref{Condition-A}) it follows that $\| {\bf A} (u) \|_{L^{r_2}}<+\infty$. Finally, recall that we also have $\| u \|_{L^\infty}<+\infty$. Consequently, each term on the right-hand side in the estimate above is bounded, hence  $\ds{(-\Delta)^{\frac{\alpha-1}{2}} ( u\, {\bf A}(u))  \in L^r(\R^n)}$.

\medskip

For the second term, we write  
\[ (-\Delta)^{\frac{\alpha-1}{2}} (-\Delta)^{-\frac{1}{2}} f=(-\Delta)^{\frac{2(\alpha-1)-\alpha}{2}} f, \]
where, since $s+\alpha=k(\alpha-1)+\varepsilon$, it holds that $2(\alpha-1)-\alpha \leq k(\alpha-1)-\alpha \leq s$. Consequently, using again the assumption on $f$ given in the first point of Theorem \ref{Th-Regularity},   we obtain $(-\Delta)^{\frac{2(\alpha-1)-\alpha}{2}} f \in L^r(\R^n)$. 

\medskip

Hence, for any $1<r<+\infty$, it follows that    $u \in \dot{W}^{2(\alpha-1),r}(\R^n)$.  By iterating this process until $k$,  we conclude that $u \in \dot{W}^{k(\alpha-1),r}(\R^n)$.   
\end{itemize}

\medskip

It remains  to prove that  $(-\Delta)^{\frac{\varepsilon}{2}}  u \in \dot{W}^{\frac{k(\alpha-1)}{2},r}(\R^n)$. To achieve this, from the identity (\ref{Fixed-point}) we write  
\begin{equation*}
\begin{split}
 (-\Delta)^{\frac{\varepsilon+k(\alpha-1)}{2}} u = &\, -(-\Delta)^{\frac{\varepsilon+k(\alpha-1)}{2}} (-\Delta)^{-\frac{\alpha}{2}} \text{div}(u\, {\bf A}(u))+ (-\Delta)^{\frac{\varepsilon+k(\alpha-1)}{2}}(-\Delta)^{-\frac{\alpha}{2}}f \\
=&\, - (-\Delta)^{-\frac{1}{2}}\text{div}\left( (-\Delta)^{\frac{\varepsilon+(k-1)(\alpha-1)}{2}}\big( u\, {\bf A}(u)\big)\right) + (-\Delta)^{\frac{\varepsilon+(k-1)(\alpha-1) -1}{2}}f. 
\end{split}
\end{equation*}

For the first term on the right-hand side, since $(-\Delta)^{\frac{k(\alpha-1)}{2}} u \in L^r(\R^n)$, $u\in L^r(\R^n)$ and $u \in L^\infty(\R^n)$, using again  Lemma \ref{Leibniz-rule}, we obtain   $(-\Delta)^{\frac{k(\alpha-1)}{2}} \big( u\, {\bf A}(u)\big) \in L^r(\R^n)$. Additionally,  since $(-\Delta)^{\frac{(k-1)(\alpha-1)}{2}} \big( u\, {\bf A}(u)\big) \in L^r(\R^n)$, where $0 \leq  \varepsilon < \alpha-1$,   using interpolation inequalities it follows that    $(-\Delta)^{\frac{\varepsilon+(k-1)(\alpha-1)}{2}} \big( u\, {\bf A}(u)\big)\in L^r(\R^n)$. Consequently, it holds  that $- (-\Delta)^{-\frac{1}{2}}\text{div}\left( (-\Delta)^{\frac{\varepsilon+(k-1)(\alpha-1)}{2}}\big( u\, {\bf A}(u)\big)\right) \in L^r(\R^n)$.

\medskip

For the second term on the right-hand side, since  $s+\alpha= k(\alpha-1)+\varepsilon$, it follows that 
\[\varepsilon+(k-1)(\alpha-1) -1=s. \]
Then, using the assumptions on $f$  it   directly follows that  $(-\Delta)^{\frac{\varepsilon+(k-1)(\alpha-1) -1}{2}}f \in L^p(\R^n)$. 

\medskip

We finally conclude that  $u \in \dot{W}^{s+\alpha,r}(\R^n)$ for any $1<r<+\infty$. 
\end{proof}

\subsection*{Step 4: Optimality of the gain in regularity}
\begin{Proposition} Under the same hypothesis as in Propositions \ref{Prop-Global-Boundedness} and \ref{Prop-Gain-Regularity},  assume that for $\varepsilon>0$, we have $f\notin \dot{W}^{s+\varepsilon,r}(\R^n)$. Then, it follows that $u\notin \dot{W}^{s+\alpha+\varepsilon,r}(\R^n)$. 
\end{Proposition}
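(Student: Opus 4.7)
The natural strategy is a contraposition argument: assume by contradiction that $u\in \dot{W}^{s+\alpha+\varepsilon,r}(\R^n)$, and show that this forces $f\in \dot{W}^{s+\varepsilon,r}(\R^n)$, contradicting the hypothesis. Applying the operator $(-\Delta)^{(s+\varepsilon)/2}$ to equation (\ref{Equation}) yields
\begin{equation*}
(-\Delta)^{(s+\alpha+\varepsilon)/2}u+(-\Delta)^{(s+\varepsilon)/2}\text{div}\big(u\,{\bf A}(u)\big)=(-\Delta)^{(s+\varepsilon)/2}f.
\end{equation*}
The first term on the left is in $L^r(\R^n)$ by the contradiction hypothesis, so the whole problem reduces to showing that the nonlinear term also lies in $L^r$. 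Since $\text{div}\circ(-\Delta)^{-1/2}$ is a combination of Riesz transforms, which are bounded on $L^r$, it is enough to prove $(-\Delta)^{(s+\varepsilon+1)/2}\big(u\,{\bf A}(u)\big)\in L^r(\R^n)$.

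For this key estimate I would apply the fractional Leibniz rule (Lemma \ref{Leibniz-rule}) with order $\sigma=s+\varepsilon+1$, splitting as
\begin{equation*}
\big\|(-\Delta)^{\sigma/2}\big(u\,{\bf A}(u)\big)\big\|_{L^r}\leq C\big\|(-\Delta)^{\sigma/2}u\big\|_{L^{r'}}\|{\bf A}(u)\|_{L^q}+C\|u\|_{L^\infty}\big\|(-\Delta)^{\sigma/2}{\bf A}(u)\big\|_{L^r},
\end{equation*}
where $1/r=1/r'+1/q$ and the auxiliary exponent $q$ is to be chosen below. The second term is controlled thanks to Proposition \ref{Prop-Global-Boundedness}, which gives $u\in L^\infty$, and the assumption (\ref{Condition-A-Sobolev}), which reduces $\|(-\Delta)^{\sigma/2}{\bf A}(u)\|_{L^r}$ to $\|u\|_{\dot{W}^{\sigma,r}}$. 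The latter is finite because $\sigma=s+\varepsilon+1\leq s+\alpha+\varepsilon$ (thanks to $\alpha>1$), so by interpolation between $\dot{W}^{s+\alpha+\varepsilon,r}$ and $L^r$ one obtains $u\in\dot{W}^{\sigma,r}$.

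The main obstacle is the first term, since ${\bf A}(u)$ cannot be placed in $L^\infty$ — precluding the naive choice $p_2=\infty$ in the Leibniz splitting. This is overcome by trading regularity for integrability via Sobolev embedding. Choose $q>n/(\alpha-1)$, which is possible since $\alpha>1$, and set $1/r'=1/r-1/q$. The condition $\alpha-1\geq n/q$ then guarantees the continuous embedding $\dot{W}^{s+\alpha+\varepsilon,r}(\R^n)\hookrightarrow\dot{W}^{\sigma,r'}(\R^n)$, so $u\in\dot{W}^{\sigma,r'}$ follows from the contradiction hypothesis. Moreover, since by Proposition \ref{Prop-Global-Boundedness} and the assumption $u\in L^{1,\infty}(\R^n)$ we have $u\in L^q$ by interpolation, the boundedness condition (\ref{Condition-A}) gives $\|{\bf A}(u)\|_{L^q}\leq C\|u\|_{L^q}<+\infty$.

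Combining the two bounds shows $(-\Delta)^{(s+\varepsilon)/2}\text{div}(u\,{\bf A}(u))\in L^r(\R^n)$, which together with the contradiction hypothesis forces $(-\Delta)^{(s+\varepsilon)/2}f\in L^r(\R^n)$, i.e., $f\in\dot{W}^{s+\varepsilon,r}(\R^n)$, contradicting the hypothesis. As anticipated, the delicate point is handling the bilinear term without an $L^\infty$ estimate for ${\bf A}(u)$; the Sobolev-embedding detour, enabled precisely by the condition $\alpha>1$, is what makes the argument go through.
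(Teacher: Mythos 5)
Your overall strategy is exactly the paper's: argue by contradiction, apply $(-\Delta)^{\frac{s+\varepsilon}{2}}$ to the equation, commute the (Riesz-transform) operator $(-\Delta)^{-\frac{1}{2}}\text{div}$, and reduce everything to showing $(-\Delta)^{\frac{s+1+\varepsilon}{2}}\big(u\,{\bf A}(u)\big)\in L^r(\R^n)$ via the fractional Leibniz rule, using $u\in L^\infty$ from Proposition \ref{Prop-Global-Boundedness}, the Sobolev boundedness (\ref{Condition-A-Sobolev}) for the term $\|u\|_{L^\infty}\,\|(-\Delta)^{\frac{s+1+\varepsilon}{2}}{\bf A}(u)\|_{L^r}$, and (\ref{Condition-A}) together with $u\in L^{1,\infty}\cap L^\infty$ for the factor $\|{\bf A}(u)\|_{L^q}$. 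This is the same decomposition as estimate (\ref{Estim-Leibniz-Rule}) in the paper.

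There is, however, one step that fails as written: the claim that $\alpha-1\geq n/q$ (with your choice $q>n/(\alpha-1)$, hence strict inequality) guarantees the embedding $\dot{W}^{s+\alpha+\varepsilon,r}(\R^n)\hookrightarrow\dot{W}^{s+1+\varepsilon,r'}(\R^n)$ with $\frac{1}{r'}=\frac{1}{r}-\frac{1}{q}$. For homogeneous Sobolev spaces on $\R^n$ such an embedding can only hold when the scaling indices match, i.e. $s+\alpha+\varepsilon-\frac{n}{r}=s+1+\varepsilon-\frac{n}{r'}$, which forces exactly $\frac{n}{q}=\alpha-1$; with strict inequality a dilation $u_\lambda(x)=u(\lambda x)$ makes the two sides scale with different powers of $\lambda$, so no such inequality is possible. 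The gap is easily repaired: either take $q=\frac{n}{\alpha-1}$ exactly (admissible when $\alpha<n+1$), or, more robustly, control $\|(-\Delta)^{\frac{s+1+\varepsilon}{2}}u\|_{L^{r'}}$ by interpolation between $\|(-\Delta)^{\frac{s+\alpha+\varepsilon}{2}}u\|_{L^{r}}$ and $\|u\|_{L^\infty}$ (Gagliardo--Nirenberg), with $\theta=\frac{s+1+\varepsilon}{s+\alpha+\varepsilon}$ and $\frac{1}{r'}=\frac{\theta}{r}$; then $q=\frac{r}{1-\theta}<+\infty$ and $\|{\bf A}(u)\|_{L^q}\leq C\|u\|_{L^q}<+\infty$ by (\ref{Condition-A}) and the interpolation of $L^{1,\infty}\cap L^\infty$. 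With this replacement your argument coincides with the paper's proof; the paper itself handles this factor by invoking the same Leibniz splitting as in (\ref{Estim-Leibniz-Rule}) together with $u\in L^\infty$, $u\in L^r$ and the regularity already established in Step 3 for every integrability exponent.
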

\begin{proof} Assume, for contradiction, that $u \in \dot{W}^{s+\alpha+\varepsilon,r}(\R^n)$. Then, applying the operator $(-\Delta)^{\frac{s+\alpha+\varepsilon}{2}}$ in each term of   equation  (\ref{Fixed-point}), we obtain
\begin{equation}\label{Identity-Contradiction}
    (-\Delta)^{\frac{s+\alpha+\varepsilon}{2}} u + (-\Delta)^{\frac{s+\alpha+\varepsilon}{2}} (-\Delta)^{-\frac{\alpha}{2}}\text{div}(u\,{\bf A}(u))= (-\Delta)^{\frac{s+\varepsilon}{2}}f.
\end{equation}

We now rewrite the second term on the left-hand side as 
\begin{equation*}
\begin{split}
 (-\Delta)^{\frac{s+\alpha+\varepsilon}{2}} (-\Delta)^{-\frac{\alpha}{2}}\text{div}(u\,{\bf A}(u)) = &\, (-\Delta)^{\frac{s+1+\varepsilon}{2}} (-\Delta)^{-\frac{1}{2}}  \text{div}(u\,{\bf A}(u)) \\
 = &\,    (-\Delta)^{-\frac{1}{2}}  \text{div} \big( (-\Delta)^{\frac{s+1+\varepsilon}{2}} (u\,{\bf A}(u))  \big).   
\end{split}
 \end{equation*}
Then, since  $u \in \dot{W}^{s+\alpha+\varepsilon,r}(\R^n)$ (where $\alpha>1$), and given that $u\in L^r$ and $u\in L^\infty$, following the same arguments used in the right-hand side of estimate (\ref{Estim-Leibniz-Rule}), we obtain that  $(-\Delta)^{\frac{s+1+\varepsilon}{2}} (u\,{\bf A}(u)) \in L^r(\R^n)$.

\medskip

Consequently, each term on left hand side in identity (\ref{Identity-Contradiction}) belongs to the space $L^r(\R^n)$ implying  that $f \in \dot{W}^{s+\varepsilon,r}(\R^n)$. This contradicts the assumption that  $f \notin \dot{W}^{s+\varepsilon,r}(\R^n)$,  completing the proof.
\end{proof}
The proof of Theorem \ref{Th-Regularity} is now complete. 
\subsection{Proof of Corollary \ref{Corollary-Regularity}}  The proof is straightforward. Since for $\alpha>1$ and any $1<r<+\infty$, we have $u\in  L^r \cap \dot{W}^{s+\alpha,r}(\R^n)$, for any $k \in \mathbb{N}$ such that $k\leq [s]+1$, it follows from interpolation inequalities that $u\in \dot{W}^{k,r}(\R^n)$. In particular, for any multi-index $|{\bf a}|\leq k$, and using the continuous embedding $L^r(\R^n)\subset \dot{M}^{1,r}(\R^n)$, we obtain $\partial^{{\bf a}}u \in \dot{M^{1,r}}(\R^n)$.

\medskip

With this information at our disposal, in the setting of  Lemma \ref{Lem-Holder-Morrey}, we fix $r>n$ to directly obtain that $u \in \mathcal{C}^{[s],\sigma}(\R^n)$, where $\sigma=1-n/r$. 

\medskip

In the homogeneous case, when $f\equiv 0$, from Theorem \ref{Th-Regularity} it follows that $u \in \dot{W}^{s,r}(\R^n)$ for any $s\geq 0$. Therefore, using the same argument as above, we conclude that $u\in \mathcal{C}^{\infty}(\R^n)$.

\section{Fractional gain of regularity for the toy model}\label{Appendix} 
We consider the toy model for equation (\ref{Equation}), which is  given in equation (\ref{Toy-Model}). As explained in the introduction, the aim of this model is to show that the weaker nonlinear effects of the term $(-\Delta)^{\frac{\beta}{2}}(u^2)$ allow us to prove a gain in regularity for weak $L^p$-solutions to equation (\ref{Toy-Model}) in the case when $0<\alpha\leq 1$.

\medskip

In the following result, in order to focus directly on this gain in regularity, we assume the existence and global boundedness of these weak $L^p$-solutions. However, these facts could be proven by following arguments similar to those in the proofs of Propositions \ref{Th-Existence}, \ref{Prop-Evolution-Lp}, and \ref{Prop-Global-Boundedness}. 

\medskip

Additionally, we rewrite equation (\ref{Toy-Model}) as the following fixed-point problem:
\begin{equation}\label{Fixed-Point-Toy-Model}
   u= - (-\Delta)^{\frac{-\alpha+\beta}{2}}(u^2)+(-\Delta)^{-\frac{\alpha}{2}}f.  
\end{equation}
\begin{Proposition} Let $0<\beta<\alpha\leq 1$. For $1<p<+\infty$, let $u \in L^p\cap L^\infty(\R^n)$ be a weak solution to equation (\ref{Fixed-Point-Toy-Model}) associated with an external source $f$.

\medskip

For $s\geq 0$, assume that $f \in \dot{W}^{-\beta, p}\cap \dot{W}^{s,p}(\R^n)$. Then, it follows that $u \in \dot{W}^{s+\alpha,p}(\R^n)$. 
\end{Proposition}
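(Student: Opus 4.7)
The plan is to run a bootstrap on the fixed-point identity \eqref{Fixed-Point-Toy-Model}, taking advantage of the fact that the operator $(-\Delta)^{\frac{-\alpha+\beta}{2}}$ is smoothing of positive order $\alpha - \beta$; this gain, analogous to the gain $\alpha - 1$ driving Theorem \ref{Th-Regularity}, remains positive in the regime $0 < \alpha \leq 1$ precisely because $\beta < \alpha$. Accordingly, write
$$s + \alpha = k(\alpha - \beta) + \varepsilon, \qquad k \in \mathbb{N},\ k \geq 1, \quad 0 \leq \varepsilon < \alpha - \beta,$$
and aim to establish $u \in \dot{W}^{j(\alpha - \beta), p}(\R^n)$ by induction on $j \in \{1, \dots, k\}$, then bridge the remaining fractional gap of width $\varepsilon$ by one extra application of the fixed-point equation together with interpolation.

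For the inductive step, apply $(-\Delta)^{\frac{j(\alpha-\beta)}{2}}$ to both sides of \eqref{Fixed-Point-Toy-Model} to obtain
$$(-\Delta)^{\frac{j(\alpha-\beta)}{2}} u = -(-\Delta)^{\frac{(j-1)(\alpha-\beta)}{2}}(u^2) + (-\Delta)^{\frac{j(\alpha-\beta) - \alpha}{2}} f.$$
The nonlinear term is handled via the fractional Leibniz rule of Lemma \ref{Leibniz-rule} with exponents $(p, \infty)$ and $(\infty, p)$; together with $u \in L^\infty$ and the previous induction level, it yields $\|(-\Delta)^{\frac{(j-1)(\alpha-\beta)}{2}}(u^2)\|_{L^p} \leq C \|u\|_{L^\infty} \|(-\Delta)^{\frac{(j-1)(\alpha-\beta)}{2}} u\|_{L^p} < +\infty$. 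For the linear term, the exponent $j(\alpha-\beta) - \alpha$ lies in $[-\beta, s]$: the lower bound uses $j \geq 1$ and $\alpha \geq \beta$, the upper bound uses $j \leq k$. Hence Lemma \ref{Lem-Interpolation} between $\dot{W}^{-\beta, p}$ and $\dot{W}^{s, p}$ produces $f \in \dot{W}^{j(\alpha-\beta) - \alpha, p}$, and consequently $u \in \dot{W}^{j(\alpha - \beta), p}$.

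To bridge from $k(\alpha - \beta)$ to $s + \alpha = k(\alpha - \beta) + \varepsilon$, apply $(-\Delta)^{\frac{s+\alpha}{2}}$ to \eqref{Fixed-Point-Toy-Model}: the linear contribution equals $(-\Delta)^{s/2} f \in L^p$ by hypothesis, and the nonlinear contribution becomes $(-\Delta)^{\frac{s+\beta}{2}}(u^2) = (-\Delta)^{\frac{(k-1)(\alpha-\beta)+\varepsilon}{2}}(u^2)$. The Leibniz rule once more controls the latter provided $u \in \dot{W}^{(k-1)(\alpha-\beta) + \varepsilon, p}$, and this intermediate regularity is obtained by interpolating between the two adjacent levels $\dot{W}^{(k-1)(\alpha-\beta), p}$ and $\dot{W}^{k(\alpha - \beta), p}$ constructed in the bootstrap. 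The main technical point is the careful bookkeeping at each step that the required regularity index of $f$, namely $j(\alpha - \beta) - \alpha$, falls inside $[-\beta, s]$; in particular, the initial step $j = 1$ is exactly the reason the seemingly ad hoc assumption $f \in \dot{W}^{-\beta, p}$ enters the hypothesis. The role of $u \in L^\infty$ is the same here as in the proof of Theorem \ref{Th-Regularity}: it absorbs one factor of $u$ at no derivative cost in the Leibniz rule, sidestepping any boundedness assumption in critical Sobolev norms.
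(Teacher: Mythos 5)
Your proposal is correct and follows essentially the same route as the paper: the same decomposition $s+\alpha=k(\alpha-\beta)+\varepsilon$, the same bootstrap on the fixed-point identity (\ref{Fixed-Point-Toy-Model}) using the fractional Leibniz rule of Lemma \ref{Leibniz-rule} with the $L^\infty$ bound absorbing one factor of $u$, and the same use of $f\in \dot{W}^{-\beta,p}\cap\dot{W}^{s,p}(\R^n)$ to handle the source at each level. Your explicit check that $j(\alpha-\beta)-\alpha\in[-\beta,s]$ and your interpolation argument bridging the final gap of width $\varepsilon$ merely spell out what the paper dispatches as ``similar arguments''; note only that the interpolation invoked there is interpolation in the Sobolev regularity scale, not the Lorentz-space statement of Lemma \ref{Lem-Interpolation}.
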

\begin{proof} The proof essentially follows the same steps as those in the proof of Proposition \ref{Prop-Gain-Regularity}. Thus, we will only outline the main arguments.

\medskip

Since $\alpha-\beta>0$, following the idea in expression (\ref{Decomposition-s-alpha}), for $k\in \mathbb{N}$ and $0<\varepsilon<\alpha-\beta$, we can write
\[ s+\alpha= k(\alpha-\beta)+\varepsilon. \]
Consequently, we will first prove that $u \in \dot{W}^{k(\alpha-\beta),p}(\R^n)$ and then verify that $(-\Delta)^{\frac{\varepsilon}{2}}u \in \dot{W}^{k(\alpha-\beta),p}(\R^n)$.

\medskip

To prove the first claim, we always  proceed by induction  $k$. For $k=1$, applying the operator $(\Delta)^{\frac{\alpha-\beta}{2}}$ to equation (\ref{Fixed-Point-Toy-Model}), we obtain 
\begin{equation*}
    (-\Delta)^{\frac{\alpha-\beta}{2}}u= -u^2+(-\Delta)^{-\frac{\beta}{2}}f. 
\end{equation*}
Here, we can easily verify that each term on the right-hand side belongs to $L^p(\R^n)$. Therefore, we conclude that $u\in \dot{W}^{\alpha-\beta,p}(\R^n)$.

\medskip

Now, for $k\geq 2$, applying   the operator $(-\Delta)^{\frac{\alpha-\beta}{2}}$ once more to each term  in the previous identity, we obtain
\begin{equation*}
  (-\Delta)^{\frac{2(\alpha-\beta)}{2}}u= -(-\Delta)^{\frac{\alpha-\beta}{2}}(u^2)+(-\Delta)^{\frac{\alpha-\beta}{2}}(-\Delta)^{-\frac{\beta}{2}}f.  
\end{equation*}
For the first term on the right-hand side,  Lemma \ref{Leibniz-rule} yields  that 
\[\|  (-\Delta)^{\frac{\alpha-\beta}{2}}(u^2)\|_{L^p}\leq C\, \|  (-\Delta)^{\frac{\alpha-\beta}{2}} u\|_{L^p}\, \| u \|_{L^\infty}<+\infty. \]
The second term on the right-hand side can be estimated directly using the assumptions on the external source $f$.

\medskip

Thus, we conclude that $u\in \dot{W}^{2(\alpha-\beta),p}(\R^n)$, and by iterating this process up to $k$, it follows that $u\in \dot{W}^{k(\alpha-\beta),p}(\R^n)$. Finally, the fact that $(-\Delta)^{\frac{\varepsilon}{2}}u \in \dot{W}^{k(\alpha-\beta),p}(\R^n)$ follows by similar arguments.
\end{proof}
\begin{Remarque} Note that, in contrast to equation (\ref{Equation}), the singular integral operator ${\bf A}(\cdot)$ is not included in the simpler model (\ref{Toy-Model}). Therefore, the additional (technical) assumption that $u \in L^{1,\infty}(\R^n)$ is no longer required in this case.
\end{Remarque}

\paragraph{{\bf Statements and Declaration}}
Data sharing does not apply to this article, as no datasets were generated or analyzed during the current study. In addition, the author declares that he has no conflicts of interest and that there are no funding supports to declare.

\vspace{1cm}

\end{document}